\newtheorem{thm}{Theorem}[section]
\newtheorem{lem}[thm]{Lemma}
\newtheorem{cor}[thm]{Corollary}
\newtheorem{pro}[thm]{Proposition}
\newtheorem{ex}[thm]{Example}
\newtheorem{rmk}[thm]{Remark}
\newtheorem{defi}[thm]{Definition}
\newcommand {\emptycomment}[1]{}
\newcommand{\Lei}{\mathsf{Leib}}
\newcommand{\nc}{\newcommand}
\newcommand{\delete}[1]{}
\nc{\CV}{\mathbf{C}}
\newcommand{\lon }{\,\rightarrow\,}
\newcommand{\be }{\begin{equation}}
\newcommand{\ee }{\end{equation}}
\newcommand{\g}{\mathfrak g}
\newcommand{\h}{\mathfrak h}
\newcommand{\la}{\mathfrak G} %Leibniz algebra
\newcommand{\huaB}{\mathcal{B}}%{{\mathcal{E}}}%{\mathcal{B}}
\newcommand{\huaL}{\mathcal{L}}
\newcommand{\huaP}{\mathcal{P}}
\newcommand{\huaH}{\mathcal{H}}
\newcommand{\huaZ}{\mathcal{Z}}
\newcommand{\frkC}{\mathfrak C}
\newcommand{\frkT}{\mathfrak T}
\newcommand{\half}{\frac{1}{2}}
\newcommand{\Courant}[1]{\left\llbracket  #1\right\rrbracket }
\newcommand{\Id}{{\rm{Id}}}
\newcommand{\br}[1]{   [ \cdot,    \cdot  ]   }
\newcommand{\dM}{\mathrm{d}}
\newcommand{\Hom}{\mathrm{Hom}}
\newcommand{\Der}{\mathrm{Der}}
\newcommand{\Lie}{\mathrm{Lie}}
\newcommand{\Nij}{\mathrm{Nij}}
\newcommand{\gl}{\mathfrak {gl}}
\newcommand{\LL}{Leibniz-Lie~}
\nc{\oprn}{\theta}
\newcommand{\B}{\mathsf{B}}
\newcommand{\NR}{\mathsf{NR}}
\newcommand{\ad}{\mathrm{ad}}
\newcommand{\pr}{\mathrm{pr}}
\newcommand{\Img}{\mathrm{Im}}
\nc{\mlabel}[1]{\label{#1}}  % Use this to suppress names
\nc{\mcite}[1]{\cite{#1}}  % Use this to suppress names
\nc{\mref}[1]{\ref{#1}}  % Use this to suppress names
\nc{\mbibitem}[1]{\bibitem{#1}} % Use this to show number
\newcommand{\C}{\mathbb{C}}
\begin{document}

\title[]{Nonabelian embedding tensors}

\author{Rong Tang}
\address{Department of Mathematics, Jilin University, Changchun 130012, Jilin, China}
\email{tangrong@jlu.edu.cn}

\author{Yunhe Sheng}
\address{Department of Mathematics, Jilin University, Changchun 130012, Jilin, China}
\email{shengyh@jlu.edu.cn}

%\author{Chenchang Zhu}
%\address{ Mathematics Institute, Georg-August-University G\"ottingen, Bunsenstrasse 3-5, 37073, G\"ottingen, Germany}
%\email{zhu@uni-math.gwdg.de }

%\date{\today}

\begin{abstract}
In this paper, first we introduce the notion of a nonabelian embedding tensor, which is a nonabelian generalization of an embedding tensor. Then we introduce the notion of a Leibniz-Lie algebra, which is the underlying algebraic structure of a nonabelian embedding tensor, and can also be viewed as a nonabelian generalization of a Leibniz algebra. Next using the derived bracket, we construct a differential graded Lie algebra, whose Maurer-Cartan elements are exactly nonabelian embedding tensors. Consequently, we obtain the differential graded Lie algebra that governs deformations of a nonabelian embedding tensor. Finally,  we define the  cohomology of a nonabelian embedding tensor and use the second cohomology group to characterize  linear deformations.

\end{abstract}

%\subjclass[2010]{17B10, 17B56, 17A42}

\keywords{cohomology, deformation,  nonabelian embedding tensor,   Leibniz-Lie algebra}

\maketitle

\tableofcontents

\allowdisplaybreaks

%\end{document}

\section{Introduction}
The notion of embedding tensors \cite{NS} can be traced back to the study of the gauged supergravity theory. Embedding tensors were used to construct  N =8 supersymmetric gauge theories and   to study the Bagger-Lambert theory of multiple M2-branes in \cite{BdH}. Embedding tensors were also used to study $E_{11}$   \cite{BND} and deformations of maximal 9-dimensional supergravity  \cite{FOT}. Recently, this topic has attracted much attention of the mathematical physics world \cite{KS,Lavau-1}. An embedding tensor on a Lie algebra $\g$ with respect to a representation $\rho:\g\to\gl(V)$ is a
linear map $T:V\lon\g$ satisfying
$$
[Tu,Tv]=T(\rho(Tu)v),\quad\forall u,v\in V.
$$
An embedding tensors gives rise to a tensor hierarchy algebra  \cite{Pal}, while the existing constructions \cite{LavauS,Lavau-2}   are still mysterious.

Embedding tensors have been already studied in mathematics under the name of averaging operators since 1960s. Luxuriant studies on averaging operators were done for Banach algebras \cite{Kelley,Miller-1,Rota} from the analysis point of view. Recently, there are more and more studies   of averaging operators from the algebraic point of view. Averaging operators on various types of  algebras  were studied in \cite{Aguiar}. In particular, an averaging operator on a Lie algebra gives rise to a Leibniz algebra.  More generally,   averaging
operators  are studied on the algebras over the general operad $\huaP$ and give rise to the di-$\huaP$ algebras and tri-$\huaP$ algebras \cite{GK}.
 Averaging operators are related to the
procedure of replication, Hadamard products  and Manin white products in the operad theory \cite{GK,Pei-Bai-Guo-Ni,PG,Uchino-2}, and can be applied  to study  double
algebras, classical Yang-Baxter equations and Gr\"obner-Shirshov bases \cite{GZ,Goncharov,Kolesnikov,ZGR}.

Note that the above vector space $V$ in the definition of an embedding tensor is understood as the space that 1-form fields take values in. Since for the  classical gauge field theories, a 1-form field  usually takes values in a Lie algebra, so it is natural to replace the vector space $V$ by a Lie algebra $\h$. This will further need an action of the Lie algebra $\g$ on the Lie algebra $\h$, and lead to the `nonabelian embedding tensor', which is the  main subject that discussed in this paper.  Such generalization for averaging operators on associative algebra were already considered   in \cite{GK,Pei-Bai-Guo-Ni}, which can be understood as weighted averaging operators.

We introduce a new algebraic structure, which is called a Leibniz-Lie algebra. A Leibniz-Lie algebra contains a Lie algebra structure and a binary operation such that some compatibility conditions hold (Definition \ref{Leibniz-dendriform}). In particular, if the Lie algebra structure is trivial, then it reduces to a Leibniz algebra. So a Leibniz-Lie algebra can be viewed as a nonabelian generalization of a Leibniz  algebra. A nonabelian embedding tensor naturally induces a Leibniz-Lie algebra.  The concrete relations can be summarized by the following diagram:
\begin{equation*}
\begin{split}
\xymatrix{
	\text{Leibniz algebras} \ar^{\text{nonabelianzation}}[rrr] &&& \text{Leibniz-Lie algebras}\\
	\text{embedding tensors} \ar^{\text{nonabelianzation}}[rrr] \ar^{\text{descendent}}[u] &&& \text{nonabelian embedding tensors.} \ar^{\text{descendent}}[u]
}
\end{split}
\mlabel{eq:reln}
\end{equation*}

To justify nonabelian embedding tensors are indeed good mathematical structures, we further  construct a differential graded Lie algebra such that its Maurer-Cartan elements are exactly nonabelian embedding tensors. As a byproduct, we obtain the differential graded Lie algebra that governs  deformations of a nonabelian embedding tensor. The deformation of algebraic structures were originated from  the pioneer
work~\cite{Ge} of Gerstenhaber. In general, deformation theory was developed for algebras over binary quadratic operads by Balavoine~\cite{Bal}. The deformation theory of embedding tensors were studied in \cite{STZ}. In the associative algebra context,
the deformation theory of averaging operators   was studied in \cite{Das} via the derived brackets \cite{Kosmann-Schwarzbach-1,Uchino-2}, and the deformation theory of averaging
associative algebras was   studied in \cite{WZ}.

A nonabelian embedding tensor induces a Leibniz-Lie algebra, which further gives rise to a Leibniz algebra, which is called the descendent Leibniz algebra of the nonabelian embedding tensor. We introduce a  cohomology theory of  nonabelian embedding tensors via the Loday-Pirashvili cohomology of the descendent Leibniz algebras. As applications, we use the second   cohomology group   to characterize   equivalent linear deformations of a  nonabelian embedding tensor. Moreover, we introduce the notion of a Nijenhuis element for a nonabelian embedding tensor, and show that trivial linear deformations   are generated by   Nijenhuis elements.

The paper is organized as follows. In Section \ref{sec:L}, we introduce the notion of nonabelian embedding tensors, and show that nonabelian embedding tensors can be characterized by graphs of the nonabelian hemisemidirect product Leibniz algebra. In Section \ref{Leibniz-Lie}, we define a new algebraic structure, which is called a Leibniz-Lie algebra. We show that a nonabelian embedding tensor naturally induces a Leibniz-Lie algebra.   In Section \ref{Deformation-DGLA}, we construct a differential graded Lie algebra whose Maurer-Cartan elements are nonabelian embedding tensors.   In Section \ref{Cohomology}, we introduce a cohomology theory of nonabelian embedding tensors. In Section \ref{Linear-Deformation}, we study   linear deformations  of nonabelian embedding tensors using the established cohomology theory.

\newpage
\section{Nonabelian embedding tensors and Leibniz algebras}\label{sec:L}
In this section, we introduce the notion of nonabelian embedding tensors, which are  the nonabelian generalization of embedding tensors. We show that nonabelian embedding tensors can be characterized by graphs of the nonabelian hemisemidirect product Leibniz algebra.

A {\bf Leibniz algebra}  is a vector space $\la$ together with a bilinear operation $[-,-]_\la:\la\otimes\la\lon\la$ such that
\begin{eqnarray*}
\label{Leibniz}[x,[y,z]_\la]_\la=[[x,y]_\la,z]_\la+[y,[x,z]_\la]_\la,\quad\forall x,y,z\in\la.
\end{eqnarray*}

A {\bf representation} of a Leibniz algebra $(\la,[-,-]_{\la})$ is a triple $(V;\rho^L,\rho^R)$, where $V$ is a vector space, $\rho^L,\rho^R:\la\lon\gl(V)$ are linear maps such that  for all $x,y\in\la$,
\begin{eqnarray*}
\rho^L([x,y]_{\la})&=&[\rho^L(x),\rho^L(y)],\\
\rho^R([x,y]_{\la})&=&[\rho^L(x),\rho^R(y)],\\
\rho^R(y)\circ \rho^L(x)&=&-\rho^R(y)\circ \rho^R(x).
\end{eqnarray*}
Here $[-,-]:\wedge^2\gl(V)\lon\gl(V)$ is the commutator Lie bracket on $\gl(V)$.
%It is straightforward to see that $(\la;L,R)$, where the left multiplication $L:\la\to\gl(\la)$ is given by $L_xy=[x,y]_\la$ and the right multiplication $R:\la\to\gl(\la)$ is defined by $R_xy=[y,x]_\la$,
%is a representation of $(\la,[\cdot,\cdot]_{\la})$, which is called the {\bf regular representation}.

Let $(\g,[-,-]_\g)$ and $(\h,[-,-]_\h)$ be Lie algebras. A Lie algebra homomorphism $\rho:\g\lon\Der(\h)$ is called an {\bf action} of $\g$ on $\h$. An action $\rho$ of $\g$ on $\h$ is called a {\bf coherent action} if it satisfies
\begin{equation}\label{eq:extracon}
  [\rho(x)u,v]_\h=0,\quad \forall x\in\g, u,v\in \h.
\end{equation}
We denote a coherent action by $(\h;\rho)$.  Denote by $\overline{\Der}(\h)\subset \Der(\h)$ the Lie subalgebra of the derivation Lie algebra $\Der(\h)$ satisfying
$$
\overline{\Der}(\h)=\{D\in\Der(\h)|[Du,v]_\h=0,~\forall u,v\in\h\}.
$$
The Lie algebra $\overline{\Der}(\h)$ is called the {\bf coherent derivation Lie algebra}. A coherent action can be alternatively described by a homomorphism from the Lie algebra $\g$ to  the  coherent derivation Lie algebra $\overline{\Der}(\h)$.
\begin{ex}{\rm
Recall that a  {\bf two-step nilpotent Lie algebra} \cite{Scheuneman} is a Lie algebra $\g$ such that
\begin{equation}\label{2-step nilpotent}
  [[x,y],z]=0,\,\,\forall x,y,z\in\g.
\end{equation}
For a two-step nilpotent Lie algebra, the adjoint action $\ad$ is a coherent action and  $\overline{\Der}(\g)=\Der(\g)$. %Let $V$ be a vector space. There is a Lie algebra structure on $L(V)=V\oplus \wedge^2V$ given by
%\begin{eqnarray*}
%[u,v]=u\wedge v,\,\,[u,v\wedge w]=[u\wedge v,w\wedge p]=0,\,\,\forall u,v,w,p\in V.
%\end{eqnarray*}
%Moreover, it is the free two-step nilpotent Lie algebra generated by $V$.
}

\end{ex}

\begin{defi} \label{defi:O}
%\begin{enumerate}
  A  {\bf nonabelian embedding tensor} on a Lie algebra $\g$ with respect to   a coherent action  $(\h;\rho)$ is a linear map $T:\h\to\g$ such that
 \begin{equation}\label{eq:nonET}
   [Tu,Tv]_\g=T\big(\rho(Tu)(v)+[u,v]_\h\big),\quad\forall u,v\in \h.
 \end{equation}
%\item[\rm(ii)]
% A {\bf nonabelian Lie-Leibniz triple} consists of a Lie algebra $(\g,[-,-]_\g)$, an action $\rho:\g\lon\Der(\h)$ of $(\g,[-,-]_\g)$ on another Lie algebra $(\h,[-,-]_\h)$ satisfying \eqref{eq:extracon} and a nonabelian embedding tensor $T:\h\to\g$.
%\end{enumerate}
\end{defi}

    \begin{defi}
      Let $T$ and $T'$ be two nonabelian  embedding tensors on a Lie algebra $\g$ with respect to a coherent action  $(\h;\rho)$. A {\bf homomorphism} from $T'$ to $T$ consists of two Lie algebra homomorphisms  $\phi_\g:\g\to\g$ and  $\phi_\h:\h\to \h$ such that
      \begin{eqnarray}
        T\circ \phi_\h&=&\phi_\g\circ T',\mlabel{defi:isocon1}\\
        \phi_\h\rho(x)u&=&\rho(\phi_\g(x))\phi_\h(u),\quad\forall x\in\g, u\in \h.\mlabel{defi:isocon3}
      \end{eqnarray}
      In particular, if both $\phi_\g$ and $\phi_\h$ are  invertible,  $(\phi_\g,\phi_\h)$ is called an  {\bf isomorphism}  from $T'$ to $T$.
    \mlabel{defi:isoO}
    \end{defi}

\begin{rmk}\label{non-abelian}
When $(\h,[-,-]_\h)$ is an abelian Lie algebra, we obtain that $T$ is a usual embedding tensor \cite{KS,STZ}. In addition, when $\rho$ is the trivial  action of $\g$ on $\h$,   $T$ is a Lie algebra homomorphism from $\h$ to $\g$.
\end{rmk}

  Consider the direct sum Lie algebra $\overline{\Der}(\h)\oplus \h$, in which the Lie bracket is given by
  $$
  [A+u,B+v]=[A,B]+[u,v]_\h,\quad \forall A,B\in\overline{\Der}(\h),  u,v\in \h
  $$
 Define $\rho:\overline{\Der}(\h)\to \gl(\overline{\Der}(\h)\oplus \h)$ by
 $$
 \rho(A)(B+v)=Av,\quad \forall A,B\in \overline{\Der}(\h),~v\in \h.
 $$
Then it is straightforward to deduce that $\rho$ is a coherent action of the Lie algebra  $\overline{\Der}(\h) $ on the Lie algebra $\overline{\Der}(\h)\oplus \h$.

\begin{pro}\label{pro:pr}
 The projection $\pr:\overline{\Der}(\h)\oplus \h\to \overline{\Der}(\h)$ is a nonabelian embedding tensor on the Lie algebra  $\overline{\Der}(\h) $ with respect to the coherent action  $(\overline{\Der}(\h)\oplus \h;\rho).$
\end{pro}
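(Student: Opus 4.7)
The plan is to directly verify the defining equation \eqref{eq:nonET} of a nonabelian embedding tensor for $T=\pr$, using that everything has already been arranged so that the two sides collapse to the same Lie bracket in $\overline{\Der}(\h)$. The coherent action property of $\rho$ was essentially built into its definition (since $\rho(A)(B+v)=Av$ lies in $\h$ and $A\in\overline{\Der}(\h)$ kills all brackets in $\h$), so I may take it for granted as stated in the paragraph preceding the proposition.

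The core of the proof is the following computation. Pick arbitrary elements $A+u,\ B+v\in\overline{\Der}(\h)\oplus\h$. On the one hand,
\begin{equation*}
[\pr(A+u),\pr(B+v)]=[A,B],
\end{equation*}
which is the bracket in $\overline{\Der}(\h)$. On the other hand, using the definition of $\rho$ and of the direct-sum bracket on $\overline{\Der}(\h)\oplus\h$,
\begin{equation*}
\rho(\pr(A+u))(B+v)+[A+u,B+v]=\rho(A)(B+v)+[A,B]+[u,v]_\h=[A,B]+Av+[u,v]_\h.
\end{equation*}
Since $Av$ and $[u,v]_\h$ both lie in the $\h$-summand, applying $\pr$ discards them, leaving
\begin{equation*}
\pr\bigl(\rho(\pr(A+u))(B+v)+[A+u,B+v]\bigr)=[A,B].
\end{equation*}
Thus the two sides of \eqref{eq:nonET} agree, proving the statement.

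There is no real obstacle: the argument is a one-line verification, and the only thing to double-check is that $\rho$ genuinely takes values in $\gl(\overline{\Der}(\h)\oplus\h)$ and is a coherent action, which is the content of the discussion immediately preceding the proposition. I would therefore phrase the proof as a short, direct computation rather than packaging it as several steps.
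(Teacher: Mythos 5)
Your computation is exactly the paper's own proof: the paper verifies \eqref{eq:nonET} for $T=\pr$ by the same one-line identity $\pr\bigl(\rho(\pr(A+u))(B+v)+[A+u,B+v]\bigr)=[A,B]=[\pr(A+u),\pr(B+v)]$, with the coherence of $\rho$ taken from the preceding paragraph just as you do. The proposal is correct and matches the paper's argument.
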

\begin{proof}
  It follows from
  $$
  \pr\Big(\rho(\pr(A+u))(B+v)+[A+u,B+v]\Big)=[A,B]=[\pr(A+u),\pr(B+v)],
  $$
  for all $A,B\in \overline{\Der}(\h), u,v\in \h$.
\end{proof}

\begin{ex}\label{Heisenberg-Lie-ex}
{\rm
The {\bf Heisenberg Lie algebra} $H_3(\mathbb C)$ is the
three-dimensional  complex  Lie algebra   with the Lie bracket given by
\begin{eqnarray*}
[e_1,e_2]=e_3,\quad [e_1,e_3]=0,\quad  [e_2,e_3]=0,
\end{eqnarray*}
with respect to a basis $\{ e_1, e_2, e_3 \}$. It is obvious that the Heisenberg Lie algebra  $H_3(\mathbb C)$ is a two-step nilpotent Lie algebra.
A linear transformation
$T=\left(\begin{array}{ccc}r_{11}&r_{12}&r_{13}\\
r_{21}&r_{22}&r_{23}\\
r_{31}&r_{32}&r_{33}\end{array}\right)$ is a nonabelian embedding tensor with respect to the adjoint action if and only if
$
   [Te_i,Te_j]=T([Te_i,e_j]+[e_i,e_j]),~~ i,j\in\{1,2,3\}.
$
First by
\begin{eqnarray*}
 0&=&[Te_1,Te_1]= T([Te_1,e_1]+[e_1,e_1])\\
 &=&T[r_{11}e_1+r_{21}e_2+r_{31}e_3, e_1]=-r_{21}r_{13}e_1-r_{21}r_{23}e_2-r_{21}r_{33}e_3,
\end{eqnarray*}
we obtain
$
r_{13}r_{21}=0,~ r_{21}r_{23}=0,~ r_{21}r_{33}=0.
$
Similarly, we have
%\begin{eqnarray*}
%r_{12}r_{13}&=&0,\quad r_{12}r_{23}=0, \quad r_{12}r_{33}=0,\\
% r_{11}r_{13}&=&r_{22}r_{13}=0, \quad r_{11}r_{23}=r_{22}r_{23}=0,\quad r_{11}r_{22}-r_{12}r_{21}=r_{11}r_{33}=r_{22}r_{33},\\
% r_{23}r_{13}&=&0,\quad r_{23}^2=0,\quad r_{23}r_{33}=r_{11}r_{23}-r_{13}r_{21}=0,\\
%  r_{13}^2&=&0,\quad r_{13}r_{23}=0,\quad -r_{13}r_{33}=r_{12}r_{23}-r_{13}r_{22}=0.
%\end{eqnarray*}
$$\begin{array}{rcl  rcl rcl}
  r_{12}r_{13}&=&0, & r_{12}r_{23}&=&0, &  r_{12}r_{33}&=&0,\\
(r_{11}+1)r_{13}&=&0, & (r_{11}+1)r_{23}&=&0, & r_{11}r_{22}-r_{12}r_{21}&=&(r_{11}+1)r_{33},\\
(r_{22}+1)r_{13}&=&0, & (r_{22}+1)r_{23}&=&0, & r_{12}r_{21}-r_{11}r_{22}&=&-(r_{22}+1)r_{33},\\
r_{11}r_{23}&=&r_{13}r_{21}, &r_{13}r_{23}&=&r_{23}^2=0, &r_{13}r_{21}-r_{11}r_{23}&=&-r_{23}r_{33},\\
r_{12}r_{23}&=&r_{13}r_{22}, &r_{13}r_{23}&=&r_{13}^2=0, &r_{13}r_{22}-r_{12}r_{23}&=&r_{13}r_{33}.
\end{array}
$$
Therefore, we have
   \begin{itemize}
     \item[\rm(i)] If $r_{13}=r_{23}=r_{33}=0$,  then   $T=\left(\begin{array}{ccc}r_{11}&r_{12}&0\\
r_{21}&r_{22}&0\\
r_{31}&r_{32}&0\end{array}\right)$ is a nonabelian embedding tensor on $H_3(\mathbb C)$ if and only if $r_{11}r_{22}=r_{12}r_{21}$. Moreover, for any $a,b,c,d,k\in\mathbb C$, we deduce that $T=\left(\begin{array}{ccc}c&d&0\\
kc&kd&0\\
a&b&0\end{array}\right)$ and $T=\left(\begin{array}{ccc}0&0&0\\
c&d&0\\
a&b&0\end{array}\right)$ are nonabelian embedding tensors on $H_3(\mathbb C)$.
     \item[\rm(ii)] If $r_{13}=r_{23}=0$ and $r_{33}\not=0$, then $r_{12}=r_{21}=0$,~$r_{11}=r_{22}$ and    $T=\left(\begin{array}{ccc}r_{11}&0&0\\
0&r_{11}&0\\
r_{31}&r_{32}&r_{33}\end{array}\right)$ is a nonabelian embedding tensor on $H_3(\mathbb C)$ if and only if   $r_{11}^2-r_{33}r_{11}-r_{33}=0$. Moreover, for any $a,b\in\mathbb C$ and a nonzero complex number $0\not=t\in\mathbb C$, we deduce that
$T=\left(\begin{array}{ccc}\frac{t+\sqrt{t^2+4t}}{2}&0&0\\
0&\frac{t+\sqrt{t^2+4t}}{2}&0\\
a&b&t\end{array}\right)$ and $T=\left(\begin{array}{ccc}\frac{t-\sqrt{t^2+4t}}{2}&0&0\\
0&\frac{t-\sqrt{t^2+4t}}{2}&0\\
a&b&t\end{array}\right)$ are  nonabelian embedding tensors on $H_3(\mathbb C)$.
   \end{itemize}
These are all the possibilities of nonabelian embedding tensors on
$H_3(\mathbb C)$ with respect to the adjoint representation.
}
\end{ex}

Let $(\g,[-,-]_\g)$ and $(\h,[-,-]_\h)$ be Lie algebras. Let $\rho:\g\lon\Der(\h)$ be a coherent action of $\g$ on $\h$. On the direct sum vector space $\g\oplus \h,$ define a bilinear bracket operation $[-,-]_\rho$ by
\begin{equation}
  [x+u,y+v]_\rho=[x,y]_\g+\rho(x)v+[u,v]_\h,\quad \forall x,y\in\g,~u,v\in \h.
\end{equation}

\begin{pro}\label{nonabelian hemisemidirect}
  With the above notations, $(\g\oplus \h,[-,-]_\rho)$ is a Leibniz algebra, which is called the {\bf nonabelian hemisemidirect product}, and denoted by $\g\ltimes^h_\rho \h$.
\end{pro}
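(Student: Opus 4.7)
The plan is to verify the left Leibniz identity
$$[X,[Y,Z]_\rho]_\rho = [[X,Y]_\rho, Z]_\rho + [Y,[X,Z]_\rho]_\rho$$
directly, by expanding both sides for arbitrary $X=x+u$, $Y=y+v$, $Z=z+w$ with $x,y,z\in\g$ and $u,v,w\in\h$, and then matching the resulting terms after sorting them by which tensor pieces they involve. Since the defining bracket $[x+u,y+v]_\rho=[x,y]_\g+\rho(x)v+[u,v]_\h$ splits cleanly into a $\g$-part and an $\h$-part, everything reduces to checking the identity in each component separately.

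The first step is to isolate the $\g$-component of both sides: only the pure Lie bracket in $\g$ contributes, so this part is precisely the Jacobi identity for $(\g,[-,-]_\g)$. The second step, the $\h$-component, is where the genuine work lies, and I would organize the many terms by their ``type'':
\begin{enumerate}
\item Terms of the form $\rho(\cdot)\rho(\cdot)w$ and $\rho([\cdot,\cdot]_\g)w$. These balance exactly because $\rho:\g\to\Der(\h)\subset\gl(\h)$ is a Lie algebra homomorphism.
\item Terms mixing one $\rho$ with one bracket $[-,-]_\h$ that involve the input variable $x$. These balance because $\rho(x)\in\Der(\h)$ is a derivation of $[-,-]_\h$, giving $\rho(x)[v,w]_\h=[\rho(x)v,w]_\h+[v,\rho(x)w]_\h$.
\item Terms mixing one $\rho$ with one bracket $[-,-]_\h$ that involve $y$. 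Here I need $[u,\rho(y)w]_\h=\rho(y)[u,w]_\h$; expanding the right-hand side by the derivation property of $\rho(y)$ leaves the obstruction $[\rho(y)u,w]_\h$, which vanishes \emph{precisely} by the coherence assumption \eqref{eq:extracon}.
\item Pure $\h$-terms $[u,[v,w]_\h]_\h$, $[[u,v]_\h,w]_\h$, $[v,[u,w]_\h]_\h$. These balance by the Jacobi identity for $(\h,[-,-]_\h)$, rewritten in Leibniz form.
\end{enumerate}

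The main obstacle, and the only nontrivial point, is step~(3): without the coherence condition $[\rho(x)u,v]_\h=0$, the Leibniz identity would fail, and one would recover only a ``hemisemidirect''-style structure with an anomalous term. So the proposition is really a precise explanation of why the extra axiom defining a coherent action was imposed. Once all four groups of terms are matched, summing them recovers the Leibniz identity for $[-,-]_\rho$, and the proof is complete.
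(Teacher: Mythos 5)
Your proposal is correct and follows essentially the same route as the paper: a direct expansion of the Leibniz identity defect, with the $\rho$-homomorphism property, the derivation property, and the two Jacobi identities cancelling all terms except $[u,\rho(y)w]_\h-\rho(y)[u,w]_\h$, which vanishes precisely by the coherence condition \eqref{eq:extracon} combined with the derivation property. Your grouping of terms and identification of the one genuinely nontrivial cancellation match the paper's argument exactly.
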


\begin{proof}
  By the Jacobi identities for the Lie brackets $[-,-]_\g$ and $[-,-]_\h$, and the fact that $\rho$ is a homomorphism from the Lie algebra $(\g,[-,-]_\g)$ to the derivation Lie algebra $\Der(\h)$, we have
  \begin{eqnarray*}
    &&[x+u,[y+v,z+w]_\rho]_\rho-[[x+u,y+v]_\rho,z+w]_\rho-[y+v,[x+u,z+w]_\rho]_\rho\\
    &=&[x,[y,z]_\g]_\g-[[x,y]_\g,z]_\g-[y,[x,z]_\g]_\g+[u,[v,w]_\h]_\h-[[u,v]_\h,w]_\h-[v,[u,w]_\h]_\h\\
    &&+\rho(x)[v,w]_\h-[\rho(x)v,w]_\h-[v,\rho(x)w]_h+\rho(x)\rho(y)w-\rho([x,y]_\g)w-\rho(y)\rho(x)w\\
    &&+[u,\rho(y)w]_\h-\rho(y)[u,w]_\h\\
    &=&[u,\rho(y)w]_\h-\rho(y)[u,w]_\h.
  \end{eqnarray*}
  By \eqref{eq:extracon} and the fact that $\rho(y)\in\Der(\h)$, we deduce that $[u,\rho(y)w]_\h-\rho(y)[u,w]_\h=0$. Therefore, $(\g\oplus \h,[-,-]_\rho)$ is a Leibniz algebra.
\end{proof}

%\begin{rmk}
%By the proof of Proposition \ref{nonabelian hemisemidirect}, we deduce that $(\g\oplus \h,[-,-]_\rho)$ is a Leibniz algebra if and only if the condition \eqref{eq:extracon} holds.
%\end{rmk}

\begin{rmk}
In fact, the nonabelian hemisemidirect product $\g\ltimes^h_\rho \h$ is a split Leibniz algebra extension of the Lie algebra $\g$ by the Lie algebra $\h$ via the antisymmetric representation. See \cite[Proposition 1]{SMT} for more details
about nonabelian extensions of Leibniz algebras.
\end{rmk}

\begin{rmk}
Note that when $\h$ is abelian, the bracket $[-,-]_\rho$ is the {\bf hemisemidirect product} of a Lie algebra and its representation introduced in \cite{Kinyon} in the study of Courant algebroids. This kind of Leibniz algebras are very important, e.g. semisimple Leibniz algebras are of this form \cite{FW}.
\end{rmk}

Let $T:\h\to\g$ be a linear map. Denote by $G_T$ the graph of $T$, i.e. $G_T=\{Tu+u|\forall u\in \h\}$.

\begin{pro}
Let $(\g,[-,-]_\g)$ and $(\h,[-,-]_\h)$ be Lie algebras. Let $\rho:\g\lon\Der(\h)$ be a coherent action of $\g$ on $\h$. Then a linear map $T:\h\to\g$ is a nonabelian embedding tensor if and only if its graph $G_T$ is a Leibniz subalgebra of the Leibniz algebra $(\g\oplus \h,[-,-]_\rho)$.
\end{pro}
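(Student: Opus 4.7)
The plan is to argue by a direct component-wise computation, treating both implications simultaneously. Given any $u,v\in\h$, the two arbitrary elements of $G_T$ are $Tu+u$ and $Tv+v$, and I would compute their bracket in the nonabelian hemisemidirect product using the defining formula of $[-,-]_\rho$:
\begin{equation*}
[Tu+u,\,Tv+v]_\rho \;=\; [Tu,Tv]_\g \;+\; \rho(Tu)v \;+\; [u,v]_\h.
\end{equation*}
Here the first summand lies in $\g$ while the other two summands lie in $\h$, so the $\g$-component is $[Tu,Tv]_\g$ and the $\h$-component is $\rho(Tu)v+[u,v]_\h$.

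Next I would observe that, by definition, $G_T$ is a Leibniz subalgebra of $\g\ltimes^h_\rho\h$ if and only if for all $u,v\in\h$ the bracket above belongs to $G_T$, that is, if and only if there exists $w\in\h$ with $Tu+u,\,Tv+v]_\rho = Tw+w$. Matching components forces $w=\rho(Tu)v+[u,v]_\h$ from the $\h$-part and then $Tw=[Tu,Tv]_\g$ from the $\g$-part. Thus $G_T$ is closed under $[-,-]_\rho$ precisely when
\begin{equation*}
[Tu,Tv]_\g \;=\; T\bigl(\rho(Tu)v+[u,v]_\h\bigr) \qquad \text{for all }u,v\in\h,
\end{equation*}
which is exactly the nonabelian embedding tensor condition \eqref{eq:nonET} from Definition \ref{defi:O}.

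Since the calculation is just an unwinding of definitions and the identification of components is immediate from the direct-sum decomposition $\g\oplus\h$, there is no real obstacle; the only point to double-check is that no additional constraint is imposed by picking a different representative of $G_T$, but since the assignment $u\mapsto Tu+u$ is injective, the element $w$ extracted from the $\h$-component is uniquely determined and the two conditions are genuinely equivalent. This yields the biconditional in one short paragraph of computation.
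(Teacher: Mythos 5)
Your proof is correct and follows essentially the same route as the paper's: compute $[Tu+u,Tv+v]_\rho=[Tu,Tv]_\g+\rho(Tu)v+[u,v]_\h$ and observe that membership of this element in $G_T$ is equivalent to the condition $[Tu,Tv]_\g=T\bigl(\rho(Tu)v+[u,v]_\h\bigr)$. Your extra remark about the uniqueness of the $\h$-component $w$ just makes explicit what the paper leaves implicit; the argument is the same.
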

\begin{proof}
  We have
  \begin{eqnarray*}
    [Tu+u,Tv+v]_\rho=[Tu,Tv]_\g+\rho(Tu)v+[u,v]_\h,\quad \forall u,v\in\h,
  \end{eqnarray*}
  which implies that the graph $G_T$ is a Leibniz subalgebra if and only if \eqref{eq:nonET} holds, i.e. $T:\h\to\g$ is a nonabelian embedding tensor.
\end{proof}

Since $\h$ and $G_T$ are naturally isomorphic, we get the following conclusion immediately.

\begin{cor}\label{average-Leibniz}
 Let $T:\h\to\g$ be a nonabelian embedding tensor on a Lie algebra $\g$ with respect to a coherent action  $(\h;\rho)$. Then $(\h,[-,-]_T)$ is a Leibniz algebra, where the Leibniz bracket $[-,-]_T$ is given by
 \begin{equation}\label{new-leibniz}
   [u,v]_T=\rho(Tu)v+[u,v]_\h,\quad\forall u,v\in\h.
 \end{equation}
 Moreover, $T$ is a Leibniz algebra homomorphism from the Leibniz algebra $(\h,[-,-]_T)$ to the Lie algebra  $(\g,[-,-]_\g)$.
\end{cor}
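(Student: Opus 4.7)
The plan is to deduce the corollary directly from the preceding proposition by transporting the Leibniz structure from the graph $G_T$ to $\h$ along the natural linear isomorphism. Concretely, define $\phi:\h\to G_T$ by $\phi(u)=Tu+u$; this is manifestly a linear bijection between $\h$ and the subspace $G_T\subset\g\oplus\h$.

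First I would observe that, by the proposition just proved, $G_T$ is a Leibniz subalgebra of the nonabelian hemisemidirect product $\g\ltimes^h_\rho\h$, so the bracket $[-,-]_\rho$ restricts to $G_T$. Then I would compute
\begin{eqnarray*}
[\phi(u),\phi(v)]_\rho=[Tu+u,Tv+v]_\rho=[Tu,Tv]_\g+\rho(Tu)v+[u,v]_\h.
\end{eqnarray*}
Using the defining identity \eqref{eq:nonET} of a nonabelian embedding tensor, the $\g$-component equals $T(\rho(Tu)v+[u,v]_\h)$, and the $\h$-component equals $\rho(Tu)v+[u,v]_\h$. Hence $[\phi(u),\phi(v)]_\rho=\phi\bigl(\rho(Tu)v+[u,v]_\h\bigr)=\phi([u,v]_T)$. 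This shows that $\phi$ is a Leibniz algebra isomorphism onto $(G_T,[-,-]_\rho|_{G_T})$, and therefore $(\h,[-,-]_T)$ is a Leibniz algebra, as it inherits the Leibniz identity via transport of structure.

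Finally, for the homomorphism claim, applying $T$ to the bracket and invoking \eqref{eq:nonET} once more gives
\begin{eqnarray*}
T([u,v]_T)=T\bigl(\rho(Tu)v+[u,v]_\h\bigr)=[Tu,Tv]_\g,
\end{eqnarray*}
which is exactly the statement that $T:(\h,[-,-]_T)\to(\g,[-,-]_\g)$ preserves brackets. There is no real obstacle here: the whole content of the corollary is already packaged into the preceding proposition, and the only work is to rewrite the graph bracket in terms of the $\h$-coordinate and invoke \eqref{eq:nonET}.
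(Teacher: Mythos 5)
Your proposal is correct and follows exactly the paper's route: the paper derives the corollary by noting that $\h$ and $G_T$ are naturally isomorphic and transporting the Leibniz bracket of the subalgebra $G_T\subset\g\ltimes^h_\rho\h$ back to $\h$, which is precisely your map $\phi(u)=Tu+u$. Your explicit computation of $[\phi(u),\phi(v)]_\rho=\phi([u,v]_T)$ and the final verification that $T$ preserves brackets just spell out the details the paper leaves implicit.
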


The Leibniz algebra $(\h,[-,-]_T)$  is called the {\bf descendent Leibniz algebra} of the nonabelian embedding tensor $T$.

\begin{ex}\label{ex:prLeibniz}
{\rm
  Consider the nonabelian embedding tensor given in Proposition \ref{pro:pr}. Then by Corollary \ref{average-Leibniz}, one obtain a Leibniz algebra structure on $\overline{\Der}(\h)\oplus \h:$
  $$
  [A+u, B+v]_\pr=[A+u, B+v]+\rho(\pr(A+u))(B+v)=[A,B]+Av+[u,v]_\h,
  $$
for all $A,B\in \overline{\Der}(\h),~u,v\in \h$,  which is exactly the nonabelian hemisemidirect product of   $\overline{\Der}(\h)$ and $ \h$ given in Proposition \ref{nonabelian hemisemidirect}.
}
\end{ex}

The association of a Leibniz algebra from a nonabelian embedding tensor enjoys the functorial property.
\begin{pro}
Let $T$ and $T'$ be two nonabelian  embedding tensors on a Lie algebra $\g$ with respect to a coherent action  $(\h;\rho)$ and $(\phi_\g,\phi_\h)$   a homomorphism   from $T'$ to $T$. Then $\phi_\h$ is a homomorphism  of Leibniz algebras from $(\h,[-,-]_{T'})$ to $(\h,[-,-]_{T})$. %\yh{change pre-Lie to Leibniz}
\end{pro}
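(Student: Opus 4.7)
The plan is a direct unpacking of the definitions. By Corollary \ref{average-Leibniz}, the descendent brackets are
\[
[u,v]_{T'}=\rho(T'u)v+[u,v]_\h,\qquad [u,v]_{T}=\rho(Tu)v+[u,v]_\h,
\]
so to verify that $\phi_\h:(\h,[-,-]_{T'})\to(\h,[-,-]_T)$ is a Leibniz algebra homomorphism, I just need to check
\[
\phi_\h\bigl(\rho(T'u)v+[u,v]_\h\bigr)=\rho(T\phi_\h(u))\phi_\h(v)+[\phi_\h(u),\phi_\h(v)]_\h
\]
for all $u,v\in\h$.

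First I would split the left-hand side using linearity of $\phi_\h$ into $\phi_\h(\rho(T'u)v)+\phi_\h([u,v]_\h)$. For the first summand, the equivariance relation \eqref{defi:isocon3} gives $\phi_\h(\rho(T'u)v)=\rho(\phi_\g(T'u))\phi_\h(v)$, and then the intertwining relation \eqref{defi:isocon1}, namely $T\circ\phi_\h=\phi_\g\circ T'$, converts $\phi_\g(T'u)$ into $T\phi_\h(u)$, producing the required $\rho(T\phi_\h(u))\phi_\h(v)$. For the second summand, the hypothesis that $\phi_\h$ is a Lie algebra homomorphism of $(\h,[-,-]_\h)$ yields $\phi_\h([u,v]_\h)=[\phi_\h(u),\phi_\h(v)]_\h$. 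Adding the two contributions gives exactly $[\phi_\h(u),\phi_\h(v)]_T$.

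There is essentially no obstacle here: the statement is a formal consequence of the three defining conditions of a homomorphism of nonabelian embedding tensors, together with the explicit formula \eqref{new-leibniz} for the descendent bracket. The only point worth being careful about is the order in which \eqref{defi:isocon1} and \eqref{defi:isocon3} are applied, since one must first push $\phi_\h$ through $\rho$ and only afterwards rewrite $\phi_\g\circ T'$ as $T\circ\phi_\h$.
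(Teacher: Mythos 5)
Your proposal is correct and follows exactly the same route as the paper's proof: expand the descendent bracket via \eqref{new-leibniz}, apply \eqref{defi:isocon3} to move $\phi_\h$ past $\rho$, rewrite $\phi_\g\circ T'$ as $T\circ\phi_\h$ using \eqref{defi:isocon1}, and use that $\phi_\h$ is a Lie algebra homomorphism on the $[-,-]_\h$ term. No differences worth noting.
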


\begin{proof}
For all $u,v\in \h$, we
have
\begin{eqnarray*}
\phi_\h([u,v]_{T'})&=&\phi_\h(\rho(T'u)v+[u,v]_\h)
=\rho(\phi_\g(T'u))\phi_\h(v)+[\phi_\h(u),\phi_\h(v)]_\h\\
&=&\rho(T\phi_\h(u))\phi_\h(v)+[\phi_\h(u),\phi_\h(v)]_\h=[\phi_\h(u),\phi_\h(v)]_T.
\end{eqnarray*}
Then $\phi_\h$ is a homomorphism of Leibniz algebras from $(\h,[-,-]_{T'})$ to $(\h,[-,-]_{T})$.
\end{proof}

\emptycomment{

Since a nonabelian embedding tensor induces a Leibniz algebra naturally, it is natural to ask what Leibniz algebras can be obtained in this way. In the sequel, we give a partial answer to this question. It turns out that semisimple Leibniz algebras can be obtained in this way. In the next section, we will provide another class of Leibniz algebras that can be obtained via nonabelian embedding tensors.

Define $\rho:\la_\Lie\to \gl(\Lei(\la))$ by
$$
\rho(\bar{x})(u)=[x,u]_\la,\quad \forall x\in\la, u\in \Lei(\la).
$$
Then $\rho$ is a representation of $\la_\Lie$ on the vector space $\Lei(\la).$

A Leibniz algebra $(\la,[-,-]_{\la})$ is said to be semisimple if the Lie algebra $\la_\Lie$ is semisimple. A remarkable fact is that any semisimple Leibniz algebra $\la$ is isomorphic to the hemisemidirect product $\la_\Lie \ltimes^h_\rho \Lei(\la)$; see \cite{FW}. Consider the direct sum Lie algebra $\la_\Lie \oplus \Lei(\la)$, in which the Lie bracket is given by
$$
[\bar{x}+u, \bar{y}+v]=[\bar{x}, \bar{y}]_{\la_\Lie},\quad \forall \bar{x}, \bar{y}\in \la_\Lie, ~u,v \in \Lei(\la).
$$
Define $\widetilde{\rho}:\la_\Lie\to \gl(\la_\Lie \oplus \Lei(\la))$ by
$$
\widetilde{\rho}(\bar{x})( \bar{y}+v)=\rho(\bar{x})(v)=[x,v]_\la,
$$
which is obviously a coherent action of the Lie algebra $\la_\Lie$ on the Lie algebra $\la_\Lie \oplus \Lei(\la)$. The following result means that any semisimple Leibniz algebra is induced by a nonabelian embedding tensor.

\begin{pro}
  The projection $\pr:\la_\Lie \oplus \Lei(\la)\to \la_\Lie$ is a nonabelian embedding tensor on the Lie algebra $\la_\Lie$ with respect to the coherent action $(\la_\Lie \oplus \Lei(\la);\widetilde{\rho})$. Furthermore, the induced descendent Leibniz algebra structure on $\la_\Lie \oplus \Lei(\la)$ is exactly the hemisemidirect product Leibniz algebra $\la_\Lie \ltimes^h_\rho \Lei(\la)$.
\end{pro}
\begin{proof}
  The fist conclusion follows from
  $$
  \pr\Big(\widetilde{\rho}(\pr(\bar{x}+u))(\bar{y}+v)+[\bar{x}+u, \bar{y}+v]\Big)=[\bar{x}, \bar{y}]_{\la_\Lie}=[\pr(\bar{x}+u),\pr(\bar{y}+v)]_{\la_\Lie},
  $$
  for all $\bar{x}, \bar{y}\in \la_\Lie, ~u,v \in \Lei(\la)$.

  By Corollary \ref{average-Leibniz}, there is an induced Leibniz algebra structure on $\la_\Lie \oplus \Lei(\la)$ given by
  $$
  [\bar{x}+u, \bar{y}+v]_\pr=[\bar{x}+u, \bar{y}+v]+\widetilde{\rho}(\pr(\bar{x}+u))(\bar{y}+v)=[\bar{x}, \bar{y}]_{\la_\Lie}+[x,v]_\la,
  $$
  which is exactly the hemisemidirect product $\la_\Lie \ltimes^h_\rho \Lei(\la)$.
\end{proof}
}

\section{Nonabelian embedding tensors and Leibniz-Lie algebras}\label{Leibniz-Lie}
In this section, we introduce the notion of a \LL algebra as the underlying algebraic structure of a  nonabelian embedding tensor. A \LL algebra gives rise to a Leibniz algebra, as well as nonabelian embedding tensors via different approaches.

\begin{defi}\label{Leibniz-dendriform}
A {\bf \LL algebra} $(\h,[-,-]_\h,\rhd)$ consists of a Lie algebra $(\h,[-,-]_\h)$ and a binary product $\rhd:\h\otimes\h\lon\h$ such that for all $x,y,z\in \h,$ one has
\begin{eqnarray}
\label{Post-1}x\rhd (y\rhd z)&=&(x\rhd y)\rhd z+y\rhd (x\rhd z)+[x,y]_\h\rhd z,\\
\label{Post-2}x\rhd[y,z]_\h&=&[x\rhd y,z]_\h=0.
\end{eqnarray}
A homomorphism from a   \LL algebra   $(\h,[-,-]_\h,\rhd)$  to  a   \LL algebra   $(\h',[-,-]_{\h'},\rhd')$ is a Leibniz algebra homomorphism $\phi_\h$ such that $\phi_\h(x\rhd y)=\phi_\h(x)\rhd'\phi_\h( y)$.
\end{defi}

\begin{rmk}
A Leibniz algebra $(\la,[-,-]_{\la})$ is naturally a \LL algebra, in which   the  Lie algebra structure is abelian.
\end{rmk}

\begin{ex}
{\rm
Let $H_3(\mathbb C)$ be the  Heisenberg Lie algebra. We define a binary product $\rhd:H_3(\mathbb C)\otimes H_3(\mathbb C)\lon H_3(\mathbb C)$ by
$$\begin{array}{rcl  rcl rcl rcl}
  e_1\rhd e_1&=&-e_3, & e_1\rhd e_2&=&e_3, &  e_2\rhd e_2&=&e_3,&  e_2\rhd e_1&=&-e_3\\
e_1\rhd e_3&=&0, & e_2\rhd e_3&=&0, & e_3\rhd e_3&=&0,&  e_3\rhd e_1&=&e_3\rhd e_2=0.
\end{array}
$$
Then $(H_3(\mathbb C),[-,-],\rhd)$ is a \LL algebra.
}
\end{ex}

\begin{thm}\label{Leibniz-up-Leibniz-dendriform}
Let $(\h,[-,-]_\h,\rhd)$ be a \LL algebra. Then the binary operation $[-,-]:\h\otimes \h\lon \h$ given by
\begin{eqnarray}
[x,y]=x\rhd y+[x,y]_\h,\,\,\,\,\forall x,y\in \h,
\end{eqnarray}
defines a Leibniz algebra structure on $\h$, which is denoted by  $\h^\Lei$ and called the {\bf subadjacent  Leibniz algebra}.

Moreover, define $\huaL:\h\lon\gl(\h)$ by
\begin{equation}\label{eq:L}
\huaL_xy=x\rhd y.
\end{equation} Then $(\h;\huaL,0)$ is a representation of the Leibniz algebra $(\h,[-,-])$ on $\h$.

\end{thm}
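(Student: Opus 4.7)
The plan is to verify both assertions by direct substitution, arranged so that \eqref{Post-1} and \eqref{Post-2} each contribute exactly their share of the Leibniz identity for $[-,-]$.

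For the Leibniz identity on $(\h,[-,-])$, I would substitute $[x,y]=x\rhd y+[x,y]_\h$ into each of $[x,[y,z]]$, $[[x,y],z]$, and $[y,[x,z]]$ and expand bilinearly. Each of the three expansions produces four terms. Of these, three types of ``cross'' terms vanish identically: $x\rhd[y,z]_\h$ and $[x\rhd y,z]_\h$ by the two halves of \eqref{Post-2} directly, and $[x,y\rhd z]_\h$ by antisymmetry of $[-,-]_\h$ combined with \eqref{Post-2}, via $[x,y\rhd z]_\h=-[y\rhd z,x]_\h=0$. What survives on the two sides of the putative Leibniz identity are the pure $\rhd\rhd$-terms $x\rhd(y\rhd z)$, $(x\rhd y)\rhd z$, $y\rhd(x\rhd z)$, the single ``semi-mixed'' term $[x,y]_\h\rhd z$ (which is \emph{not} killed by \eqref{Post-2} since the bracket sits in the left argument of $\rhd$), and the pure Lie-double-bracket terms $[x,[y,z]_\h]_\h$, $[[x,y]_\h,z]_\h$, $[y,[x,z]_\h]_\h$.

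The surviving identity then splits cleanly into two independent pieces: the $\rhd$-only equation
$$x\rhd(y\rhd z) \;=\; (x\rhd y)\rhd z + [x,y]_\h\rhd z + y\rhd(x\rhd z),$$
which is exactly \eqref{Post-1}, and the $[-,-]_\h$-only equation, which is the Jacobi identity for the Lie algebra $(\h,[-,-]_\h)$. Both hold by hypothesis, so $(\h,[-,-])$ is a Leibniz algebra.

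For the representation claim, note that since the right action is zero, the second and third axioms of a Leibniz-algebra representation recalled in Section~\ref{sec:L} are trivially satisfied. The only nontrivial axiom, $\huaL_{[x,y]}=[\huaL_x,\huaL_y]$, when evaluated on $z\in\h$ reads $(x\rhd y)\rhd z+[x,y]_\h\rhd z = x\rhd(y\rhd z)-y\rhd(x\rhd z)$, which is simply \eqref{Post-1} rearranged. The only real piece of care needed in the whole argument is tracking which ``mixed'' terms vanish — in particular noticing that the vanishing of $[x,y\rhd z]_\h$ needs antisymmetry of $[-,-]_\h$ before \eqref{Post-2} applies, whereas $[x,y]_\h\rhd z$ does not vanish and is genuinely needed on the right-hand side of \eqref{Post-1}; once this is handled, everything is bookkeeping.
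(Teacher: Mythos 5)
Your proposal is correct and follows essentially the same route as the paper's proof: expand the bracket $[x,y]=x\rhd y+[x,y]_\h$ in the Leibniz identity, kill the cross terms via \eqref{Post-2} (with the antisymmetry of $[-,-]_\h$ handling $[x,y\rhd z]_\h$, a point you make more explicit than the paper does), and observe that what remains splits into \eqref{Post-1} plus the Jacobi identity, with the representation claim again reducing to \eqref{Post-1}. No gaps.
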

\begin{proof}
For all $x,y,z\in \h$, we have
\begin{eqnarray*}
[x,[y,z]=[x,y\rhd z+[y,z]_\h]&=&x\rhd (y\rhd z)+[x,y\rhd z]_\h+x\rhd [y,z]_\h+[x,[y,z]_\h]_\h\\
                             &\stackrel{\eqref{Post-2}}{=}&x\rhd (y\rhd z)+[x,[y,z]_\h]_\h.
\end{eqnarray*}
On the other hand, we have
\begin{eqnarray*}
[[x,y],z]+[y,[x,z]&=&[x\rhd y+[x,y]_\h,z]+[y,x\rhd z+[x,z]_\h]\\
                   &=&(x\rhd y)\rhd z+[x\rhd y,z]_\h+[x,y]_\h\rhd z+[[x,y]_\h, z]_\h\\
                   &&+y\rhd (x\rhd z)+[y,x\rhd z]_\h+y\rhd [x,z]_\h+[y,[x,z]_\g]_\h\\
                   &\stackrel{\eqref{Post-2}}{=}&(x\rhd y)\rhd z+[x,y]_\h\rhd z+[[x,y]_\h, z]_\h+y\rhd (x\rhd z)+[y,[x,z]_\h]_\h.
\end{eqnarray*}
By \eqref{Post-1}, we deduce that $(\h,[-,-])$ is a Leibniz algebra.

Also by   \eqref{Post-1},  we get
$
\huaL_{[x,y]}=[\huaL_x,\huaL_y],
$
which implies that  $(\h;\huaL,0)$ is a representation of the Leibniz algebra $(\h,[-,-])$ on $\h$.
\end{proof}

An embedding tensor naturally induces a Leibniz algebra. Now a nonabelian embedding tensor naturally induces a \LL algebra.

\begin{thm}\label{Leibniz-dendriform-under-o-operator}
  Let  $T:\h\to\g$ be a nonabelian embedding tensor on a Lie algebra $\g$ with respect to a coherent action  $(\h;\rho)$. Then there is a \LL algebra structure on the Lie algebra $(\h,[-,-]_\h)$ given by
\begin{eqnarray}
u \rhd v=\rho(Tu)v,\,\,\,\,\forall u,v\in \h.
\end{eqnarray}
\end{thm}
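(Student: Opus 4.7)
\medskip

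The plan is to directly verify the two defining axioms \eqref{Post-1} and \eqref{Post-2} of a Leibniz-Lie algebra using the three pieces of structure available: the fact that $\rho$ is a Lie algebra homomorphism into $\Der(\h)$, the coherence condition \eqref{eq:extracon}, and the defining identity \eqref{eq:nonET} of a nonabelian embedding tensor.

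First I would check \eqref{Post-2}, which is essentially immediate from coherence. Unfolding $x\rhd[y,z]_\h=\rho(Tx)[y,z]_\h$ and using that $\rho(Tx)$ is a derivation of $(\h,[-,-]_\h)$, I split this as $[\rho(Tx)y,z]_\h+[y,\rho(Tx)z]_\h$. Both summands vanish by \eqref{eq:extracon} (the second after antisymmetry), which simultaneously gives $[x\rhd y,z]_\h=[\rho(Tx)y,z]_\h=0$.

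Next, for \eqref{Post-1}, I would rewrite both sides in terms of $\rho$ and $T$. The left-hand side is $\rho(Tx)\rho(Ty)z$, while the right-hand side becomes
\begin{equation*}
\rho\bigl(T(\rho(Tx)y)\bigr)z+\rho(Ty)\rho(Tx)z+\rho\bigl(T[x,y]_\h\bigr)z.
\end{equation*}
Now I would use \eqref{eq:nonET} in the form $T(\rho(Tx)y)+T[x,y]_\h=[Tx,Ty]_\g$, so that the first and third terms on the right combine to $\rho([Tx,Ty]_\g)z$. Finally, the Lie algebra homomorphism property of $\rho$ turns this into $[\rho(Tx),\rho(Ty)]z=\rho(Tx)\rho(Ty)z-\rho(Ty)\rho(Tx)z$, and the unwanted summand cancels $\rho(Ty)\rho(Tx)z$, matching the left-hand side.

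There is no real obstacle here: the argument is a straightforward bookkeeping exercise, and the only subtlety is to notice that \eqref{Post-2} uses only coherence while \eqref{Post-1} is exactly the nonabelian embedding tensor identity repackaged. The proof is essentially two short computations and does not require any auxiliary lemma beyond what has already been established.
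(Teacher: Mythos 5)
Your proposal is correct and follows essentially the same route as the paper: verify \eqref{Post-1} by expanding $\rho(Tu)\rho(Tv)w$ via the homomorphism property of $\rho$ and the identity \eqref{eq:nonET}, and verify \eqref{Post-2} from the derivation property together with the coherence condition \eqref{eq:extracon}. The only cosmetic difference is that you work from the right-hand side of \eqref{Post-1} toward the left, whereas the paper goes the other way.
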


\begin{proof}
For all $u,v,w\in \h$, we have
\begin{eqnarray*}
u \rhd (v\rhd w)&=&\rho(Tu)\rho(Tv)w=\rho([Tu,Tv]_\g)w+\rho(Tv)\rho(Tu)w\\
                                  &\stackrel{\eqref{eq:nonET}}{=}&\rho\big(T(\rho(Tu)v+[u,v]_\h)\big)w+\rho(Tv)\rho(Tu)w\\
                                  &=&(u \rhd v)\rhd w+[u,v]_\h\rhd w+v \rhd (u\rhd w),
\end{eqnarray*}
which implies that \eqref{Post-1} in Definition \ref{Leibniz-dendriform} holds. Moreover, we have
\begin{eqnarray*}
u\rhd[v,w]_\h=\rho(Tu)[v,w]_\h=[\rho(Tu)v,w]_\h+[v,\rho(Tu)w]_\h.
\end{eqnarray*}
By \eqref{eq:extracon},   $u\rhd[v,w]_\h=[u\rhd v,w]_\h=0$. Thus,   $(\h,[-,-]_\h,\rhd)$ is a \LL algebra.
\end{proof}

\begin{rmk}
 The Leibniz algebra determined by the \LL algebra $(\h,[-,-]_\h,\rhd)$ according to  Theorem \ref{Leibniz-up-Leibniz-dendriform} is exactly the Leibniz algebra given in Corollary \ref{average-Leibniz}.
\end{rmk}

\begin{ex}{\rm
 Consider the nonabelian embedding tensor given in Proposition \ref{pro:pr}. Then by Theorem  \ref{Leibniz-dendriform-under-o-operator},  $(\overline{\Der}(\h)\oplus \h, [-,-], \rhd)$ is a \LL algebra, where $[-,-]$ is the direct sum Lie bracket and $\rhd$ is given by
  $$
   (A+u)\rhd( B+v)=\rho(\pr(A+u))( B+v)=Av,
  $$
for all $A,B\in \overline{\Der}(\h),~u,v\in \h$.

Furthermore, by Theorem \ref{Leibniz-up-Leibniz-dendriform}, the \LL algebra  $(\overline{\Der}(\h)\oplus \h, [-,-], \rhd)$ gives rise to a Leibniz algebra which is exactly the nonabelian hemisemidirect product Leibniz algebra  given in Example \ref{ex:prLeibniz}.
}
\end{ex}

It is straightforward to obtain the following result, which implies that the association given in Theorem \ref{Leibniz-dendriform-under-o-operator} is functorial.
\begin{pro}
Let $T$ and $T'$ be two nonabelian  embedding tensors on a Lie algebra $\g$ with respect to a coherent action  $(\h;\rho)$ and $(\phi_\g,\phi_\h)$   a homomorphism  from $T'$ to $T$. Then $\phi_\h$ is a homomorphism of \LL algebras from $(\h,[\cdot,\cdot]_\h,\rhd_{T'})$ to $(\h,[\cdot,\cdot]_\h,\rhd_{T})$. %\yh{change pre-Lie to Leibniz}
\end{pro}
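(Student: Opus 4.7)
The plan is to verify the two axioms required of a Leibniz-Lie algebra homomorphism as given in Definition \ref{Leibniz-dendriform}: namely, that $\phi_\h$ preserves the Lie bracket on $\h$, and that $\phi_\h$ intertwines the two products $\rhd_{T'}$ and $\rhd_T$. The first property is immediate, since by the very definition of a homomorphism of nonabelian embedding tensors (Definition \ref{defi:isoO}), $\phi_\h$ is already a Lie algebra homomorphism of $(\h,[-,-]_\h)$.

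The substantive verification is the compatibility with $\rhd$. Recall from Theorem \ref{Leibniz-dendriform-under-o-operator} that $u\rhd_T v=\rho(Tu)v$ and $u\rhd_{T'}v=\rho(T'u)v$. I would start from $\phi_\h(u\rhd_{T'}v)=\phi_\h(\rho(T'u)v)$, then apply \eqref{defi:isocon3} with $x=T'u$ to push $\phi_\h$ across $\rho$, obtaining $\rho(\phi_\g(T'u))\phi_\h(v)$, and finally use the intertwining relation \eqref{defi:isocon1}, i.e. $\phi_\g\circ T'=T\circ\phi_\h$, to rewrite $\phi_\g(T'u)$ as $T(\phi_\h(u))$. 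This gives $\rho(T\phi_\h(u))\phi_\h(v)=\phi_\h(u)\rhd_T\phi_\h(v)$, as desired.

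There is no real obstacle here; the proof is a short chain of substitutions, essentially dualising the computation in the previous proposition (which handled the descendent Leibniz algebra) to the Leibniz-Lie setting. The only thing worth pointing out is that we do not need to reverify the Lie bracket compatibility since it is built into the hypothesis that $\phi_\h$ is a Lie algebra homomorphism of $\h$, so the entire proof reduces to the three-line calculation above.
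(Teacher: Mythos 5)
Your proof is correct and is exactly the intended argument: the paper omits the proof of this proposition (labelling it ``straightforward''), but your three-step substitution using \eqref{defi:isocon3} and \eqref{defi:isocon1} mirrors verbatim the paper's proof of the preceding proposition on the descendent Leibniz algebras, adapted to the product $\rhd$. Nothing is missing.
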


At the end of this section, we show that a  \LL algebra
 $(\h,[-,-]_\h,\rhd)$ can  give rise to a nonabelian embedding tensor via two different ways such that the subadjacent Leibniz algebra is exactly the descendent Leibniz algebra of the nonabelian embedding tensor. First we make some preparations. Let $(\la,[-,-]_{\la})$ be a Leibniz algebra. Denote by $\Lei(\la)$ the ideal of squares generated by $[x,x]_\la$ for all $x\in\la$. Then
$$
\la_\Lie:=\la/\Lei(\la)
$$
is naturally a Lie algebra in which we denote the Lie bracket by $[-,-]_{\la_\Lie}$. The equivalence class of $x\in\la$ in $\la_\Lie$ will be denoted by $\bar{x}$. Moreover, $\Lei(\la)$ is contained in the left center of the Leibniz algebra $\la.$ We denote by
\begin{equation}
  \pr:\la\lon\la_\Lie
\end{equation}
the natural projection from the Leibniz algebra $\la$ to the Lie algebra $\la_\Lie$. Obviously, the projection $\pr$ preserves the bracket operation, i.e. the following equality holds:
\begin{equation}
  \pr[x,y]_\la=[\pr(x),\pr(y)]_{\la_\Lie},\quad \forall x,y\in \la.
\end{equation}

Let  $(\h,[-,-]_\h,\rhd)$ be a \LL algebra.  Since $[-,-]_\h$ is skewsymmetric, the ideal $\Lei(\h^\Lei)$ is generated by $x\rhd x$ for all $x\in\h$. Consider the quotient Lie algebra
$$
(\h^\Lei)_\Lie=\h^\Lei/\Lei(\h^\Lei).
$$
Define an action of $(\h^\Lei)_\Lie$ on the Lie algebra $(\h,[-,-]_\h)$ by
$$
\rho(\bar{u})v=u\rhd v,\quad \forall u,v\in\h.
$$
It is well defined. In fact, if $\bar{u}=\bar{u'}$, then $u-u'\in \Lei(\h^\Lei)$. By \eqref{Post-1}, we obtain $(u-u')\rhd v=0.$ Moreover, by \eqref{Post-2}, we have
$$
[\rho(\bar{u})v,w]_\h=[u\rhd v,w]_\h=0, \quad \rho(\bar{u})[v,w]_\h={u}\rhd[v,w]_\h=0.
$$

\begin{pro}
 Let $(\h,[-,-]_\h,\rhd)$  be a \LL algebra. Then the natural projection $\pr:\h\lon(\h^\Lei)_\Lie$ is a nonabelian embedding tensor on the Lie algebra $(\h^\Lei)_\Lie$ with respect to the coherent action $(\h;\rho)$ such that the induced descendent Leibniz algebra is exactly the subadjacent Leibniz algebra $\h^\Lei$.

\end{pro}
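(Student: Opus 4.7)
The plan is to verify three things in succession: (i) $\rho$ is a bona fide coherent action of $(\h^{\Lei})_{\Lie}$ on $(\h,[-,-]_\h)$; (ii) the projection $\pr$ satisfies the nonabelian embedding tensor identity \eqref{eq:nonET}; and (iii) the descendent Leibniz bracket from Corollary \ref{average-Leibniz} coincides with the subadjacent bracket from Theorem \ref{Leibniz-up-Leibniz-dendriform}.

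For (i), the paragraph preceding the statement already handles the well-definedness of $\rho(\bar u)$ on the quotient (via \eqref{Post-1} applied to $(x\rhd x)\rhd v$) and the coherence identity $[\rho(\bar u)v,w]_\h = 0$. What remains is to check that each $\rho(\bar u)$ is a derivation of $(\h,[-,-]_\h)$ and that $\rho$ is a Lie algebra homomorphism. The derivation property is immediate from \eqref{Post-2}, since $u \rhd [v,w]_\h = 0$ and $[u \rhd v,w]_\h = 0$ there, while $[v,u \rhd w]_\h = -[u \rhd w,v]_\h = 0$ by antisymmetry of $[-,-]_\h$. For the homomorphism property, I would use the description $[\bar u,\bar v]_{(\h^{\Lei})_{\Lie}} = \overline{u \rhd v + [u,v]_\h}$ of the induced quotient bracket to rewrite
\begin{equation*}
\rho([\bar u,\bar v]_{(\h^{\Lei})_{\Lie}})w = (u \rhd v + [u,v]_\h)\rhd w,
\end{equation*}
and then match this against $[\rho(\bar u),\rho(\bar v)]w = u \rhd (v \rhd w) - v \rhd (u \rhd w)$ via a single rearrangement of \eqref{Post-1}.

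For (ii), both sides of the nonabelian embedding tensor equation reduce to the class $\overline{u \rhd v + [u,v]_\h}$: the left-hand side $[\pr(u),\pr(v)]_{(\h^{\Lei})_{\Lie}}$ by definition of the quotient Lie bracket, and the right-hand side $\pr(\rho(\pr(u))v + [u,v]_\h) = \pr(u \rhd v + [u,v]_\h)$ trivially. For (iii), formula \eqref{new-leibniz} yields $[u,v]_\pr = \rho(\pr(u))v + [u,v]_\h = u \rhd v + [u,v]_\h$, which is exactly the subadjacent Leibniz bracket on $\h^{\Lei}$.

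Because so much of the setup has already been packaged into the paragraph preceding the proposition, the only piece with any computational content is the Lie algebra homomorphism check in (i), and even that collapses to a one-line invocation of \eqref{Post-1}. The remaining items are tautological given the definitions of $(\h^{\Lei})_{\Lie}$, the subadjacent bracket, and the descendent bracket, so I do not anticipate any serious obstacle; the proof is essentially a matter of unwinding notation.
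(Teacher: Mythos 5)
Your proposal is correct and follows essentially the same route as the paper: the paper's proof is exactly your step (ii), the single chain of equalities $[\pr(u),\pr(v)]_{(\h^\Lei)_\Lie}=\pr([u,v]_\h+u\rhd v)=\pr([u,v]_\h+\rho(\pr(u))v)$, with the well-definedness and coherence of $\rho$ delegated to the preceding paragraph just as you note. Your additional check that $\rho$ is a Lie algebra homomorphism via \eqref{Post-1} is a detail the paper leaves implicit, and your verification of it is correct.
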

\begin{proof}
 It follows from
\begin{eqnarray*}
  [\pr(u),\pr(v)]_{(\h^\Lei)_\Lie}=\pr[u,v]=\pr([u,v]_\h+u\rhd v)=\pr([u,v]_\h+\rho(\pr({u}))v)
\end{eqnarray*}
directly.
\end{proof}

There is also another approach to construct a nonabelian embedding tensor from a \LL algebra. Let  $(\h,[-,-]_\h,\rhd)$ be a \LL algebra and $\Der(\h)$ the derivation Lie algebra of the Lie algebra $(\h,[-,-]_\h)$.
By  \eqref{Post-2}, $\huaL_u\in\overline{\Der}(\h)$, where $\huaL$ is given by \eqref{eq:L}.

\begin{pro}
 Let $(\h,[-,-]_\h,\rhd)$  be a \LL algebra. Then  $\huaL:\h\lon \overline{\Der}(\h)$ is a nonabelian embedding tensor on the Lie algebra $\overline{\Der}(\h)$ via the natural coherent action on the Lie algebra $(\h,[-,-]_\h)$ such that the induced descendent Leibniz algebra is exactly the subadjacent Leibniz algebra $\h^\Lei$.

\end{pro}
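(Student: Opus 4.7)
The plan is to verify four things in order: first, that the natural action of $\overline{\Der}(\h)$ on $(\h,[-,-]_\h)$ is coherent; second, that $\huaL_u$ actually lives in the coherent derivation subalgebra $\overline{\Der}(\h)$; third, that $\huaL$ satisfies the nonabelian embedding tensor identity \eqref{eq:nonET}; and finally, that the induced descendent bracket agrees with the subadjacent bracket of Theorem \ref{Leibniz-up-Leibniz-dendriform}.

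For the first step, I would observe that the tautological action $\rho(D)v:=Dv$ is a Lie homomorphism by construction, and its coherence $[\rho(D)v,w]_\h=0$ is literally the defining condition of $\overline{\Der}(\h)$. For the second step, I would unpack \eqref{Post-2}: the identity $u\rhd[v,w]_\h=[u\rhd v,w]_\h=0$ immediately gives that $\huaL_u$ is a derivation of $(\h,[-,-]_\h)$ (since both sides of the Leibniz rule vanish) and also that it satisfies the extra coherence condition $[\huaL_u v,w]_\h=0$, placing $\huaL_u$ inside $\overline{\Der}(\h)$ as noted just before the statement.

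The third and central step is where the \LL algebra hypothesis is really used, but the heavy lifting is already packaged inside Theorem \ref{Leibniz-up-Leibniz-dendriform}: that theorem supplies $\huaL_{[u,v]}=[\huaL_u,\huaL_v]$ with $[u,v]=u\rhd v+[u,v]_\h$, and rewriting $u\rhd v=\rho(\huaL_u)v$ turns this identity directly into \eqref{eq:nonET} with $T=\huaL$. For the last step, I would substitute into the formula \eqref{new-leibniz} of Corollary \ref{average-Leibniz} to obtain
\[
[u,v]_{\huaL}=\rho(\huaL_u)v+[u,v]_\h=u\rhd v+[u,v]_\h,
\]
which is precisely the subadjacent bracket of $\h^{\Lei}$.

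There is no genuine obstacle; the only mildly subtle point I would watch is confirming the \emph{coherence} of $\huaL_u$ rather than just its derivation property, but both are handled by the single identity \eqref{Post-2}. Overall, the proposition is essentially a repackaging of Theorem \ref{Leibniz-up-Leibniz-dendriform} through the lens of nonabelian embedding tensors.
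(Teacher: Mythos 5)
Your proposal is correct and follows essentially the same route as the paper: the paper also deduces $\huaL_u\in\overline{\Der}(\h)$ from \eqref{Post-2} (in the sentence just before the proposition) and obtains the identity $[\huaL_u,\huaL_v]=\huaL_{[u,v]_\h+\huaL_uv}$ from \eqref{Post-1}, which is exactly \eqref{eq:nonET} for $T=\huaL$ under the tautological coherent action. Your extra care in spelling out the coherence of the action and the agreement of the descendent bracket with the subadjacent one only fills in what the paper dismisses as obvious.
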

\begin{proof}
  By  \eqref{Post-1}, we have
\begin{eqnarray*}
  [\huaL_u,\huaL_v]=\huaL_{[u,v]_\h+\huaL_uv},\quad \forall u,v\in\h,
\end{eqnarray*}
which implies that $\huaL:\h\lon \overline{\Der}(\h)$ is a nonabelian embedding tensor. The other conclusion is obvious.
\end{proof}

\section{The controlling differential graded Lie algebra of nonabelian embedding tensors}\label{Deformation-DGLA}
In this section first we recall the controlling graded Lie algebra that characterize Leibniz algebras as Maurer-Cartan elements. Then we construct the controlling differential graded Lie algebra of  nonabelian embedding tensors.

%\subsection{The controlling graded Lie algebra of Leibniz algebras}

A permutation $\sigma\in\mathbb S_n$ is called an {\bf $(i,n-i)$-shuffle} if $\sigma(1)<\cdots <\sigma(i)$ and $\sigma(i+1)<\cdots <\sigma(n)$. If $i=0$ or $n$, we assume $\sigma=\Id$. The set of all $(i,n-i)$-shuffles will be denoted by $\mathbb S_{(i,n-i)}$.

Let $\g$ be a vector space. We consider the graded vector space $$C^\bullet(\g,\g)=\oplus_{n=0}^{+\infty}C^n(\g,\g)=\oplus_{n=0}^{+\infty}\Hom(\otimes^{n+1}\g,\g).$$ It is known that $C^\bullet(\g,\g)$ equipped with the {\bf  Balavoine bracket} ~\cite{Bal}:
\begin{eqnarray}\label{leibniz-bracket}
[P,Q]_\B=P\bar{\circ}Q-(-1)^{pq}Q\bar{\circ}P,\,\,\,\,\forall P\in C^{p}(\g,\g),Q\in C^{q}(\g,\g),
\end{eqnarray}
is a graded Lie algebra,  where $P\bar{\circ}Q\in C^{p+q}(\g,\g)$ is defined by
$
P\bar{\circ}Q=\sum_{k=1}^{p+1}P\circ_k Q,
$
and $\circ_k$ is defined by
\begin{eqnarray*}
 \nonumber&&(P\circ_kQ)(x_1,\cdots,x_{p+q+1})\\
&=&\sum_{\sigma\in\mathbb S_{(k-1,q)}}(-1)^{(k-1)q}(-1)^{\sigma}P(x_{\sigma(1)},\cdots,x_{\sigma(k-1)},Q(x_{\sigma(k)},\cdots,x_{\sigma(k+q-1)},x_{k+q}),x_{k+q+1},\cdots,x_{p+q+1}).
\end{eqnarray*}

\emptycomment{
\begin{rmk}\label{NR-subalgebra}
Let $\g$ be a vector space. Then $\big(\oplus_{n=0}^{+\infty}\Hom(\wedge^{n+1}\g,\g),[\cdot,\cdot]_\NR\big)$ is a subalgebra of the graded Lie algebra $(\oplus_{n=0}^{+\infty}\Hom(\otimes^{n+1}\g,\g),[\cdot,\cdot]_\B)$, where  $[\cdot,\cdot]_\NR$ is the Nijenhuis-Richardson bracket \cite{NR,NR2}.
\end{rmk}
}

%Recall that a degree $1$ element $\theta\in\huaG^1$ is called a Maurer-Cartan element  of a differential graded Lie algebra $( \oplus_{k\in\mathbb Z}\huaG^k,[\cdot,\cdot],d)$ if it
%satisfies the  Maurer-Cartan  equation:$d \theta+\half[\theta,\theta]=0.$

The following conclusion is straightforward.
\begin{lem}\label{lem:LeibnizMC}
For $\Omega\in C^{1}(\g,\g)=\Hom(\otimes^2\g,\g)$, we have
\begin{eqnarray*}
\half[\Omega,\Omega]_\B(x_1,x_2,x_3)=\Omega\bar{\circ}\Omega(x_1,x_2,x_3)=\Omega(\Omega(x_1,x_2),x_3)-\Omega(x_1,\Omega(x_2,x_3))
+\Omega(x_2,\Omega(x_1,x_3)).
\end{eqnarray*}
Thus, $\Omega$ defines a Leibniz algebra structure if and only if $[\Omega,\Omega]_\B=0$, i.e. $\Omega$ is a Maurer-Cartan element of the graded Lie algebra $( C^\bullet(\g,\g),[-,-]_\B)$.
\end{lem}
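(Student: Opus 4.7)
The lemma has two parts: a computational identity expressing $\frac{1}{2}[\Omega,\Omega]_{\B}$ in a closed form, and the characterization of Leibniz algebra structures as Maurer-Cartan elements. The plan is to verify the identity by a direct, finite unwinding of the definitions, and then observe that vanishing of this expression is literally the Leibniz identity.

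First I would unpack the bracket. With $P=Q=\Omega$ and $p=q=1$, the definition of the Balavoine bracket in \eqref{leibniz-bracket} gives $[\Omega,\Omega]_{\B}=\Omega\bar{\circ}\Omega-(-1)^{1\cdot1}\Omega\bar{\circ}\Omega=2\,\Omega\bar{\circ}\Omega$, which is why the factor $\frac{1}{2}$ appears and establishes the first equality of the statement. The essential task is therefore to compute $\Omega\bar{\circ}\Omega = \Omega\circ_1\Omega + \Omega\circ_2\Omega$ on arbitrary $(x_1,x_2,x_3)\in \g^{\otimes 3}$.

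Next I would enumerate the relevant shuffles and evaluate the two terms separately. For $k=1$, the set $\mathbb S_{(0,1)}$ contains only the identity and the prefactor $(-1)^{(k-1)q}=1$, so $(\Omega\circ_1\Omega)(x_1,x_2,x_3)=\Omega(\Omega(x_1,x_2),x_3)$. For $k=2$, $\mathbb S_{(1,1)}=\{\Id,(12)\}$ and the prefactor is $(-1)^{(k-1)q}=-1$, giving
\begin{eqnarray*}
(\Omega\circ_2\Omega)(x_1,x_2,x_3)
&=&-\Omega\bigl(x_1,\Omega(x_2,x_3)\bigr)+\Omega\bigl(x_2,\Omega(x_1,x_3)\bigr).
\end{eqnarray*}
Summing these two contributions yields exactly the claimed expression
\begin{eqnarray*}
\Omega\bar{\circ}\Omega(x_1,x_2,x_3)=\Omega(\Omega(x_1,x_2),x_3)-\Omega(x_1,\Omega(x_2,x_3))+\Omega(x_2,\Omega(x_1,x_3)).
\end{eqnarray*}

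For the final clause, I would simply observe that writing $[x,y]_\la:=\Omega(x,y)$, the vanishing of $\frac{1}{2}[\Omega,\Omega]_{\B}(x_1,x_2,x_3)$ is equivalent to $[x_1,[x_2,x_3]_\la]_\la=[[x_1,x_2]_\la,x_3]_\la+[x_2,[x_1,x_3]_\la]_\la$, which is precisely the Leibniz identity. There is no real obstacle here; the only place to be careful is bookkeeping of the shuffle signs and the outer sign $(-1)^{(k-1)q}$ for $k=2$, which is what produces the alternating pattern on the right-hand side.
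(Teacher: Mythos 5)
Your proposal is correct: the shuffle enumeration, the signs $(-1)^{(k-1)q}$ and $(-1)^{\sigma}$, and the identification with the Leibniz identity all check out. The paper itself gives no proof (it labels the lemma ``straightforward''), and your argument is exactly the direct unwinding of the definitions that is intended, so there is nothing to compare beyond noting that you have written out what the paper leaves implicit.
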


%\subsection{The controlling differential graded Lie algebra of  nonabelian embedding tensors}
Let $(\g,[-,-]_\g)$ and $(\h,[-,-]_\h)$ be Lie algebras. Let $\rho:\g\lon\Der(\h)$ be a coherent action of $\g$ on $\h$.
Consider the graded vector space
$C^*(\h,\g):=\oplus_{n=1}^{+\infty}\Hom(\otimes^n\h,\g)$.
Define a linear map $\dM:\Hom(\otimes^n\h,\g)\to \Hom(\otimes^{n+1}\h,\g)$ by
\begin{eqnarray}
\nonumber&&(\dM f) ( v_1,\cdots, v_{n+1} )\\
\label{diff}&=&\sum_{1\le i< j\le n+1}(-1)^{n-1+i} f(v_1,\cdots,\hat{v}_i,\cdots,v_{j-1},[v_i,v_j]_{\h},v_{j+1},\cdots,v_{n+1}),
\end{eqnarray}
for all $f\in \Hom(\otimes^n\h,\g)$ and $v_1,\cdots, v_{n+1} \in\h$.
Also define a graded bracket operation
 $$\Courant{-,-}: \Hom(\otimes^{m}\h,\g)\times \Hom(\otimes^{n}\h,\g)\to \Hom(\otimes^{m+n}\h,\g),$$
{\footnotesize\begin{eqnarray}
&&\nonumber\Courant{\theta,\phi }(v_1,\cdots,v_{m+n})\\
\label{gla-embedding}&=&\sum_{k=1}^{m}\sum_{\sigma\in\mathbb S_{(k-1,n)}}(-1)^{(k-1)n+1}(-1)^{\sigma}\theta(v_{\sigma(1)},\cdots,v_{\sigma(k-1)},\rho(\phi (v_{\sigma(k)},\cdots,v_{\sigma(k+n-1)}))v_{k+n},v_{k+n+1},\cdots,v_{m+n})\\
&&\nonumber+\sum_{\sigma\in\mathbb S_{(m,n)}}(-1)^{mn+1}(-1)^{\sigma}[\theta(v_{\sigma(1)},\cdots,v_{\sigma(m)}),\phi (v_{\sigma(m+1)},\cdots,v_{\sigma(m+n-1)},v_{\sigma(m+n)})]_{\g}\\
&&\nonumber+\sum_{k=1}^{n}\sum_{\sigma\in\mathbb S_{(k-1,m)}}(-1)^{m(k+n-1)}(-1)^{\sigma}\phi (v_{\sigma(1)},\cdots,v_{\sigma(k-1)},\rho(\theta(v_{\sigma(k)},\cdots,v_{\sigma(k+m-1)}))v_{k+m},v_{k+m+1},\cdots,v_{m+n}),
\end{eqnarray}
}
for all  $\theta\in \Hom(\otimes^{m}\h,\g),~\phi \in \Hom(\otimes^{n}\h,\g)$ and $v_1,\cdots, v_{m+n} \in\h$.

\begin{thm}\label{dgla-deforamtion-homotopy}
Let $\rho:\g\lon\Der(\h)$ be a coherent action of a Lie algebra $(\g,[-,-]_\g)$ on a Lie algebra $(\h,[-,-]_\h)$. Then the triple $(C^*(\h,\g),\Courant{-,-},\dM)$ is a differential graded Lie algebra, whose Maurer-Cartan elements are  nonabelian embedding tensors.
\end{thm}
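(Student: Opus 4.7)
The plan is to realize the triple $(C^*(\h,\g),\Courant{-,-},\dM)$ as the derived structure coming from the Balavoine graded Lie algebra of the nonabelian hemisemidirect product. By Proposition~\ref{nonabelian hemisemidirect}, the bracket $[-,-]_\rho$ makes $\g\oplus\h$ into a Leibniz algebra, so by Lemma~\ref{lem:LeibnizMC} the corresponding $2$-cochain $\mu\in\Hom(\otimes^2(\g\oplus\h),\g\oplus\h)$ is a Maurer-Cartan element of the Balavoine graded Lie algebra $(C^\bullet(\g\oplus\h,\g\oplus\h),[-,-]_\B)$. I would then decompose $\mu=\mu_\g+\mu_\rho+\mu_\h$ according to the three components $[-,-]_\g$, $\rho$, and $[-,-]_\h$ of $[-,-]_\rho$.

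Next, I would embed $\iota\colon C^*(\h,\g)\hookrightarrow C^\bullet(\g\oplus\h,\g\oplus\h)$ by extending each $f\colon\h^{\otimes n}\to\g$ by zero on arguments containing a $\g$-component, viewing its output inside $\g\subset\g\oplus\h$. The key structural observation is that this embedding lands in an \emph{abelian} graded subalgebra: for $f,g$ in the image, any Balavoine composition $f\bar{\circ}g$ would require plugging a $\g$-valued output of $g$ into an $\h$-slot of $f$, hence vanishes, so $[\iota(f),\iota(g)]_\B=0$.

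With these ingredients, a direct expansion of the Balavoine bracket on the image of $\iota$ identifies the differential and bracket of the theorem as the pieces of the derived bracket built from $\mu$: one has $\dM f=-[\mu_\h,f]_\B$ and $\Courant{f,g}=[\,[\mu_\g+\mu_\rho,f]_\B,\,g]_\B$ (up to the standard sign conventions). The Kosmann-Schwarzbach derived-bracket machinery then yields the DGLA axioms directly from $[\mu,\mu]_\B=0$: the graded Jacobi identity for $\Courant{-,-}$ follows from the full Maurer-Cartan equation; $\dM^2=0$ follows from $[\mu_\h,\mu_\h]_\B=0$, which is the Jacobi identity for $[-,-]_\h$; and the derivation property $\dM\Courant{f,g}=\Courant{\dM f,g}+(-1)^{|f|}\Courant{f,\dM g}$ comes from the vanishing of the cross components of $[\mu,\mu]_\B$, which encode the fact that $\rho(x)\in\Der(\h)$ together with the coherence condition~\eqref{eq:extracon}. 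Matching the explicit formula~\eqref{gla-embedding} against the derived bracket is the main technical obstacle, as it requires careful tracking of shuffle signs in the Balavoine bracket; if preferred, one can bypass the derived-bracket viewpoint and instead verify the three DGLA axioms directly from \eqref{diff}--\eqref{gla-embedding}, at the cost of a longer but routine shuffle computation.

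Finally, for $T\in C^1(\h,\g)=\Hom(\h,\g)$, I would just unpack the definitions: formula~\eqref{diff} gives $(\dM T)(v_1,v_2)=-T([v_1,v_2]_\h)$, and a short calculation from~\eqref{gla-embedding} (summing over $\mathbb{S}_{(0,1)}$ and $\mathbb{S}_{(1,1)}$) yields $\tfrac12\Courant{T,T}(v_1,v_2)=[Tv_1,Tv_2]_\g-T(\rho(Tv_1)v_2)$. Hence the Maurer-Cartan equation $\dM T+\tfrac12\Courant{T,T}=0$ is equivalent to $[Tv_1,Tv_2]_\g=T\bigl(\rho(Tv_1)v_2+[v_1,v_2]_\h\bigr)$ for all $v_1,v_2\in\h$, which is exactly equation~\eqref{eq:nonET}. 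This establishes the bijection between Maurer-Cartan elements and nonabelian embedding tensors, completing the proof.
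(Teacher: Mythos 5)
Your proposal is correct and follows essentially the same route as the paper: both realize $\Courant{-,-}$ and $\dM$ as derived brackets with respect to the pieces $\mu_\g\boxplus\rho$ (your $\mu_\g+\mu_\rho$) and $\mu_\h$ of the nonabelian hemisemidirect product inside the Balavoine graded Lie algebra on $\g\oplus\h$, use the abelian-subalgebra observation, obtain the DGLA axioms from the component-wise vanishing of $[\mu,\mu]_\B$ (with $[\mu_\g\boxplus\rho,\mu_\h]_\B=0$ encoding the derivation and coherence conditions), and conclude with the same unpacking of the Maurer--Cartan equation. The only cosmetic difference is that the paper fixes the sign as $\Courant{\theta,\phi}=(-1)^{m-1}[[\mu_\g\boxplus\rho,\theta]_\B,\phi]_\B$ and $\dM=[\mu_\h,-]_\B$ where you leave the signs to convention.
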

\begin{proof}
In short, the graded Lie algebra $(C^*(\h,\g),\Courant{-,-})$ is obtained via the derived bracket \cite{Kosmann-Schwarzbach-1}. In fact, the Balavoine bracket $[-,-]_{\B}$ associated to the direct sum vector space $\g\oplus \h$ gives rise to a graded Lie algebra $(\oplus_{k=0}^{+\infty}\Hom(\otimes^{k+1}(\g\oplus \h),\g\oplus \h),[-,-]_{\B})$. Obviously $\oplus_{k=0}^{+\infty}\Hom(\otimes^{k+1}\h,\g)$ is an abelian subalgebra. We denote the Lie brackets $[-,-]_\g$ and $[-,-]_\h$ by $\mu_\g$ and $\mu_\h$ respectively, and denote the hemisemidirect product Leibniz algebra structure on $\g\oplus\h$ by $\mu_\g\boxplus\rho$. Thus $\mu_\g\boxplus\rho$ and $\mu_\h$ are Maurer-Cartan elements of the graded Lie algebra $(\oplus_{k=0}^{+\infty}\Hom(\otimes^{k+1}(\g\oplus \h),\g\oplus \h),[-,-]_{\B})$. %Define a differential $d_{\mu_\g\boxplus\rho}$ on $(\oplus_{k=0}^{+\infty}\Hom(\otimes^{k+1}(\g\oplus \h),\g\oplus \h),[-,-]_{\B})$ via
%$$ d_{\mu_\g\boxplus\rho}:=[\mu_\g\boxplus\rho,-]_{\B}.$$
Define the derived bracket on the graded vector space $\oplus_{k=1}^{+\infty}\Hom(\otimes^k\h,\g)$ by
$$
  \Courant{\theta,\phi}:=(-1)^{m-1}[[\mu_\g\boxplus\rho,\theta]_{\B},\phi]_{\B},\quad\forall \theta\in\Hom(\otimes^m\h,\g),~\phi\in\Hom(\otimes^n\h,\g),
$$
which is exactly the bracket given by \eqref{gla-embedding}. By $[\mu_\g\boxplus\rho,\mu_\g\boxplus\rho]_{\B}=0,$ we deduce that $(C^*(\h,\g),\Courant{-,-})$ is a graded Lie algebra.

Define a linear map $\dM=:[\mu_\h,-]_{\B}$ on the graded space $\oplus_{k=1}^{+\infty}\Hom(\otimes^k(\g\oplus \h),\g\oplus \h)$. We obtain that $\dM$ is closed on the subspace $\oplus_{k=1}^{+\infty}\Hom(\otimes^k\h,\g)$, and is given by \eqref{diff}.

By $[\mu_\h,\mu_\h]_{\B}=0$, we obtain that $\dM\circ \dM=0.$ Moreover, by $\Img(\rho)\subset\Der(\h)$ and $[\rho(x)u,v]_\h=0$, we obtain that $[\mu_\g\boxplus\rho,\mu_\h]_{\B}=0.$ Then, we deduce that $\dM$ is a derivation of $(C^*(\h,\g),\Courant{-,-})$. Therefore,  $(C^*(\h,\g),\Courant{-,-},\dM)$ is a differential graded Lie algebra.

Finally, for a degree 1 element $T\in \Hom(\h,\g)$, we have
\begin{eqnarray*}
\big(\dM T+\frac{1}{2}\Courant{T,T}\big)(u,v)=-T[u,v]_\h-T(\rho(Tu)v)+[Tu,Tv]_\g.
\end{eqnarray*}
Thus, Maurer-Cartan elements are precisely nonabelian embedding tensors.
\end{proof}

Let $T:\h\to\g$ be a nonabelian embedding tensor. Since $T$ is a Maurer-Cartan element of the differential graded Lie algebra $(C^*(\h,\g),\Courant{-,-},\dM)$ by Theorem~\ref{dgla-deforamtion-homotopy}, we deduce that
$$d_{T}:=\dM+\Courant{T,-}$$
 is a graded derivation on the graded Lie
algebra $(C^*(\h,\g),\Courant{-,-})$ satisfying $d^2_{T}=0$.
  Therefore, $(C^*(\h,\g),\Courant{-,-},d_{{T}})$ is a differential graded Lie algebra.
This differential graded Lie algebra can control deformations of nonabelian embedding tensors. % We have obtained the following result.

\begin{thm}\label{thm:deformation}
Let $T:\h\to\g$ be a nonabelian embedding tensor on a Lie algebra $\g$ with respect to a coherent action  $(\h;\rho)$ and $T':\h\to\g$ a
linear map. Then  $T+T'$ is still
a nonabelian embedding tensor on   $\g$ with respect to the coherent action  $(\h;\rho)$ if and only if $T'$ is a Maurer-Cartan
element of the differential graded Lie algebra
$(C^*(\h,\g),\Courant{-,-},d_{{T}})$.
\end{thm}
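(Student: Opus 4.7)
The plan is to combine Theorem~\ref{dgla-deforamtion-homotopy}, which identifies nonabelian embedding tensors with Maurer-Cartan elements of $(C^*(\h,\g),\Courant{-,-},\dM)$, with the standard twisting trick for Maurer-Cartan elements of a differential graded Lie algebra. Since both $T$ and $T'$ lie in the degree $1$ component $\Hom(\h,\g)$, all sign issues are controlled by the parity $(-1)^{1\cdot 1}=-1$, and the equivalence we want should drop out of a direct expansion.

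First I would invoke Theorem~\ref{dgla-deforamtion-homotopy} to translate the claim: $T+T'$ is a nonabelian embedding tensor if and only if
\[
\dM(T+T')+\tfrac{1}{2}\Courant{T+T',\,T+T'}=0.
\]
Expanding by linearity of $\dM$ and bilinearity of $\Courant{-,-}$, this becomes
\[
\dM T+\dM T'+\tfrac{1}{2}\Courant{T,T}+\tfrac{1}{2}\Courant{T,T'}+\tfrac{1}{2}\Courant{T',T}+\tfrac{1}{2}\Courant{T',T'}=0.
\]
The terms $\dM T+\tfrac{1}{2}\Courant{T,T}$ vanish by the assumption that $T$ itself is a nonabelian embedding tensor (again invoking Theorem~\ref{dgla-deforamtion-homotopy}).

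Next I would use the graded skew-symmetry of $\Courant{-,-}$: since $|T|=|T'|=1$, we have $\Courant{T',T}=-(-1)^{1\cdot 1}\Courant{T,T'}=\Courant{T,T'}$, so the cross terms combine into $\Courant{T,T'}$. The surviving equation is
\[
\dM T'+\Courant{T,T'}+\tfrac{1}{2}\Courant{T',T'}=0.
\]
By the very definition $d_T=\dM+\Courant{T,-}$, this is precisely
\[
d_T T'+\tfrac{1}{2}\Courant{T',T'}=0,
\]
i.e.\ the Maurer-Cartan equation for $T'$ in the twisted differential graded Lie algebra $(C^*(\h,\g),\Courant{-,-},d_T)$.

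There is no real obstacle here: the only point that requires a moment of care is the sign check showing that the two cross terms $\Courant{T,T'}$ and $\Courant{T',T}$ add (rather than cancel), which follows from the odd degree of both $T$ and $T'$. Once that is observed, the equivalence is a purely formal consequence of the Maurer-Cartan characterization already established in Theorem~\ref{dgla-deforamtion-homotopy} together with the fact, implicit in the construction of $d_T$, that $d_T^2=0$ (which is itself the Maurer-Cartan property of $T$ in the untwisted dgla).
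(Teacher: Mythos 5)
Your proposal is correct and follows essentially the same route as the paper: invoke Theorem~\ref{dgla-deforamtion-homotopy} to rewrite the condition as the Maurer-Cartan equation for $T+T'$, expand, cancel $\dM T+\tfrac{1}{2}\Courant{T,T}$ using that $T$ is itself a Maurer-Cartan element, and combine the cross terms via graded symmetry to obtain $d_TT'+\tfrac{1}{2}\Courant{T',T'}=0$. The paper simply states this equivalence without displaying the intermediate expansion and sign check, which you carry out correctly.
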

\begin{proof}
By Theorem \ref{dgla-deforamtion-homotopy}, $T+T'$ is
a nonabelian embedding tensor if and only if
\begin{eqnarray*}
  \dM (T+T')+\frac{1}{2}\Courant{T+T',T+T'}=0,
\end{eqnarray*}
which is equivalent to
$$
d_T  T'+\frac{1}{2}\Courant{T',T'}=0.
$$
That is, $T'$ is a Maurer-Cartan
element of
$(C^*(\h,\g),\Courant{-,-},d_{{T}})$.
\end{proof}

\section{Cohomologies of   nonabelian embedding tensors}\label{Cohomology}

In this section, we establish a cohomology theory for nonabelian embedding tensors via the Loday-Pirashvili   cohomology of the descendent Leibniz algebra given in Corollary \ref{average-Leibniz}.

\begin{defi}{\rm (\cite{Loday and Pirashvili})}  Let $(V;\rho^L,\rho^R)$ be a representation of a Leibniz algebra $(\la,[-,-]_{\la})$.
The {\bf  Loday-Pirashvili cohomology} of $\la$ with   coefficients in $V$ is the cohomology of the cochain complex $(C^\bullet(\la,V)=\oplus_{k=0}^{+\infty}C^k(\la,V),\partial)$, where $C^k(\la,V)=
\Hom(\otimes^k\la,V)$ and the coboundary operator
$\partial:C^k(\la,V)\to C^
{k+1}(\la,V)$
is defined by
\begin{eqnarray*}
(\partial f)(x_1,\cdots,x_{k+1})&=&\sum_{i=1}^{k}(-1)^{i+1}\rho^L(x_i)f(x_1,\cdots,\hat{x_i},\cdots,x_{k+1})+(-1)^{k+1}\rho^R(x_{k+1})f(x_1,\cdots,x_{k})\\
                      \nonumber&&+\sum_{1\le i<j\le k+1}(-1)^if(x_1,\cdots,\hat{x_i},\cdots,x_{j-1},[x_i,x_j]_\la,x_{j+1},\cdots,x_{k+1}),
\end{eqnarray*}
for all $x_1,\cdots, x_{k+1}\in\la$. %The resulting cohomology is denoted by $H^*(\la,V)$.
\end{defi}
%The regular representation $(\la;L,R)$ is very important. We denote the corresponding cochain complex by $(C^\bullet\la,\la),\partial^{reg})$ and the resulting cohomology   by $H^*_{\rm reg}(\la)$.

 Let $T:\h\to\g$ be a nonabelian  embedding tensor on a Lie algebra $\g$ with respect to a coherent action  $(\h;\rho)$.
By Corollary \ref{average-Leibniz}, $[u,v]_T:=[u,v]_\h+\rho(Tu)v$ defines a Leibniz algebra structure on $\h$. Furthermore, define $\rho^L:\h\lon\gl(\g)$ and $\rho^R:\h\lon\gl(\g)$ by
\begin{equation}
\rho^L(u)y:=[Tu,y]_\g,\quad \rho^R(v)x:=[x,Tv]_\g-T(\rho(x)v).
\end{equation}

\begin{lem}\label{new-rep}
  With above notations, $(\g;\rho^L,\rho^R)$ is  a representation of the Leibniz algebra $(\h,[-,-]_T)$.
\end{lem}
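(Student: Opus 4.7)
The plan is to verify directly the three defining axioms of a Leibniz algebra representation for $(\g;\rho^L,\rho^R)$ over $(\h,[-,-]_T)$, namely
\begin{eqnarray*}
\rho^L([u,v]_T)&=&[\rho^L(u),\rho^L(v)],\\
\rho^R([u,v]_T)&=&[\rho^L(u),\rho^R(v)],\\
\rho^R(v)\circ\rho^L(u)&=&-\rho^R(v)\circ\rho^R(u),
\end{eqnarray*}
using four structural ingredients: the nonabelian embedding tensor identity $T\bigl(\rho(Tu)v+[u,v]_\h\bigr)=[Tu,Tv]_\g$, the Jacobi identity for $[-,-]_\g$, the fact that $\rho:\g\to\Der(\h)$ is a Lie algebra homomorphism, and the coherence condition $[\rho(x)u,v]_\h=0$.

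For the first axiom, I would rewrite $T[u,v]_T=T([u,v]_\h+\rho(Tu)v)=[Tu,Tv]_\g$ via \eqref{eq:nonET}, so $\rho^L([u,v]_T)y=[[Tu,Tv]_\g,y]_\g$, and then a single application of the Jacobi identity in $\g$ produces $\rho^L(u)\rho^L(v)y-\rho^L(v)\rho^L(u)y$.

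For the second axiom, I would expand $\rho^R([u,v]_T)x$ using $T[u,v]_T=[Tu,Tv]_\g$ together with $\rho(x)[u,v]_T=\rho(x)[u,v]_\h+\rho(x)\rho(Tu)v$. On the other side, expanding $[\rho^L(u),\rho^R(v)]x$ produces a term $[Tu,[x,Tv]_\g]_\g-[[Tu,x]_\g,Tv]_\g$, which by Jacobi collapses to $[x,[Tu,Tv]_\g]_\g$. The surviving terms can then be matched using two facts: the identity $[Tu,T(\rho(x)v)]_\g=T(\rho(Tu)\rho(x)v+[u,\rho(x)v]_\h)$ from \eqref{eq:nonET}, and $\rho([Tu,x]_\g)=\rho(Tu)\rho(x)-\rho(x)\rho(Tu)$. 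The leftover equality needed is $T\bigl(\rho(x)[u,v]_\h\bigr)=T\bigl([u,\rho(x)v]_\h\bigr)$, whose difference is $T[\rho(x)u,v]_\h$ since $\rho(x)\in\Der(\h)$, and this vanishes by the coherence condition \eqref{eq:extracon}. This axiom carries the most terms and is therefore the main obstacle.

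For the third axiom, I would sum $\rho^R(v)\rho^L(u)x$ and $\rho^R(v)\rho^R(u)x$ directly. The $\g$-bracket terms cancel immediately by antisymmetry of $[-,-]_\g$, as do the two $T\rho([Tu,x]_\g)v$ contributions. What remains is $-[T(\rho(x)u),Tv]_\g+T(\rho(T\rho(x)u)v)$, which by \eqref{eq:nonET} applied to the pair $(\rho(x)u,v)$ simplifies to $-T[\rho(x)u,v]_\h$, and this again vanishes by coherence. With all three axioms verified, $(\g;\rho^L,\rho^R)$ is a representation of the descendent Leibniz algebra.
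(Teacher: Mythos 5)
Your proposal is correct and follows essentially the same route as the paper: all three axioms are verified by direct expansion, using $T[u,v]_T=[Tu,Tv]_\g$ from \eqref{eq:nonET}, the Jacobi identity in $\g$, the homomorphism/derivation properties of $\rho$, and the coherence condition to kill the leftover terms $T[\rho(x)u,v]_\h$. The only cosmetic difference is in the third axiom, where the paper first collapses $\rho^L(u)x+\rho^R(u)x$ to $-T\rho(x)u$ before applying $\rho^R(v)$, whereas you expand both terms and cancel afterwards; the substance is identical.
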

\begin{proof}
By \eqref{eq:nonET}, for all $u,v\in\h$ and $x\in\g$, we have
 \begin{eqnarray*}
  \Big(\rho^L([u,v]_T)-[\rho^L(u),\rho^L(v)]\Big)(x)&=&[T[u,v]_T,x]_\g-[Tu,[Tv,x]_\g]_\g+[Tv,[Tu,x]_\g]_\g\\
  &=&[[T u,Tv]_\g,x]_\g-[Tu,[Tv,x]_\g]_\g+[Tv,[Tu,x]_\g]_\g\\
  &=&0,
 \end{eqnarray*}
 which implies that $\rho^L([u,v]_T)=[\rho^L(u),\rho^L(v)]$.

 By \eqref{eq:extracon} and \eqref{eq:nonET}, for all $u,v\in\h$ and $x\in\g$, we have
 \begin{eqnarray*}
 && \Big( \rho^R([u,v]_{T})-[\rho^L(u),\rho^R(v)]\Big)(x)\\&=&[x,T[u,v]_{T}]_\g-T(\rho(x)[u,v]_{T})-[Tu,[x,Tv]_\g-T(\rho(x)v)]_\g+[[Tu,x]_\g,Tv]_\g-T(\rho([Tu,x]_\g)v)\\
 &=&[x,[Tu,{T}v]_\g]_\g-[Tu,[x,Tv]_\g]_\g+[[Tu,x]_\g,Tv]_\g\\
 &&-T[u,\rho(x)v]_\g-T\rho(x)\rho(Tu)v+[Tu,T(\rho(x)v)]_\g-T\rho(Tu)\rho(x)v+T\rho(x)\rho(Tu)v\\
 &=&0,
 \end{eqnarray*}
 which implies that $\rho^R([u,v]_T)=[\rho^L(u),\rho^R(v)]$.

 Finally, also by \eqref{eq:extracon} and \eqref{eq:nonET}, we have
 \begin{eqnarray*}
   \Big(\rho^R(v)\circ \rho^L(u)+\rho^R(v)\circ \rho^R(u)\Big)(x)&=& \rho^R(v)( -T\rho(x))=-[T\rho(x)u,Tv]_\g+T\rho(T\rho(x)u)v\\
   &=&-[\rho(x)u,v]_\g=0,
 \end{eqnarray*}
 which implies that $\rho^R(v)\circ \rho^L(u)+\rho^R(v)\circ \rho^R(u)=0.$ Therefore, $(\g;\rho^L,\rho^R)$ is  a representation of the Leibniz algebra $(\h,[-,-]_T)$.
\end{proof}

Let $\partial_T: \Hom(\otimes^k\h,\g)\to   \Hom(\otimes^{k+1}\h,\g)$ be the corresponding Loday-Pirashvili coboundary operator of the Leibniz algebra $(\h,[-,-]_T)$ with   coefficients in $(\g;\rho^L,\rho^R)$. More precisely, $\partial_T: \Hom(\otimes^k\h,\g)\to   \Hom(\otimes^{k+1}\h,\g)$ is given by
               \begin{eqnarray}
               \label{eq:defpt} && \partial_T \theta(u_1,\cdots,u_{k+1})\\
              \nonumber &=&\sum_{i=1}^{k}(-1)^{i+1}[Tu_i,\theta(u_1,\cdots,\hat{u}_i,\cdots, u_{k+1})]_\g\\
                 \nonumber&&+(-1)^{k+1}[\theta(u_1,\cdots,  u_{k}),Tu_{k+1}]_\g+(-1)^kT(\rho(\theta(u_1,\cdots, u_{k}))u_{k+1})\\
                \nonumber &&+\sum_{1\le i<j\le k+1}(-1)^{i}\theta(u_1,\cdots,\hat{u}_i,\cdots,u_{j-1},\rho(Tu_i)(u_j)+[u_i,u_j]_\h,u_{j+1},\cdots, u_{k+1}).
               \end{eqnarray}

Now we define a cohomology theory governing deformations of a nonabelian  embedding tensor $T:\h\lon\g.$ Define the space of  $0$-cochains $\frkC^0(T)$ to be $0$ and the space  of $1$-cochains $\frkC^1(T)$  to be $\g$. For $n\geq2$, define the space of $n$-cochains $\frkC^n(T)$ as $\frkC^n(T)=\Hom(\otimes^{n-1}\h,\g)$.

\begin{defi}\label{de:opcoh}
  Let $T:\h\lon \g$ be a nonabelian  embedding tensor on a Lie algebra $\g$ with respect to a coherent action  $(\h;\rho)$.    We define the {\bf cohomology of
  the nonabelian embedding tensor $T$} to be the cohomology of the cochain complex $( \frkC^\bullet(T)=\oplus _{k=0}^{+\infty} \frkC^k(T),\partial_T)$.
\end{defi}

Denote the set of $k$-cocycles by $\huaZ^k(T)$ and the set of $k$-coboundaries by $\huaB^k(T)$. Denote by \begin{equation}\huaH^k(T)=\huaZ^k(T)/\huaB^k(T), \quad k \geq 0, \label{eq:ocoh}\end{equation}
the corresponding $k$-th cohomology group.

Comparing the coboundary operators $\partial_T$ given above and the
operators $d_{{T}}=\dM+\Courant{T,-}$ defined by the Maurer-Cartan element $T$, we have

\begin{pro}\label{pro:danddT}
 Let ${T}:\h\to\g$ be a nonabelian  embedding tensor. Then we have
\begin{eqnarray}\label{differential-coho}
 \partial_T\theta=(-1)^{k-1}d_{T}\theta,\quad \forall \theta\in \Hom(\otimes^k\h,\g),\,\,k=1,2,\cdots,
 \end{eqnarray}
\end{pro}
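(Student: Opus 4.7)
\medskip

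\noindent\textbf{Proof proposal.} My plan is to verify the identity by direct computation, expanding the right-hand side $d_T\theta = \dM\theta + \Courant{T,\theta}$ via the definitions \eqref{diff} and \eqref{gla-embedding} and matching it term-by-term against $(-1)^{k-1}\partial_T\theta$ as written in \eqref{eq:defpt}. Since $T$ has degree $1$ (i.e.\ $m=1$ in the formula for $\Courant{-,-}$), the shuffle summations in \eqref{gla-embedding} collapse dramatically, which is what makes the comparison feasible.

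First I would unpack $\Courant{T,\theta}$ for $\theta\in\Hom(\otimes^k\h,\g)$ by specializing \eqref{gla-embedding} to $m=1$, $n=k$. The three summands of \eqref{gla-embedding} yield respectively: (a) from the $k=1$ summand in the first sum (a single identity shuffle in $\mathbb S_{(0,k)}$), one term of the form $-T\!\left(\rho(\theta(v_1,\dots,v_k))v_{k+1}\right)$; (b) from the middle sum over $\sigma\in\mathbb S_{(1,k)}$ (each such shuffle is determined by the position $i$ of the single $T$-argument, with sign $(-1)^{i-1}$), the bracket terms
\[
\sum_{i=1}^{k+1}(-1)^{k+i}[Tv_i,\theta(v_1,\dots,\hat v_i,\dots,v_{k+1})]_\g;
\]
(c) from the third sum, after writing each $\sigma\in\mathbb S_{(\ell-1,1)}$ as insertion of $v_i$ into the $j$-th slot with $i<j$, the $\rho$-insertion terms
\[
\sum_{1\le i<j\le k+1}(-1)^{k+1-i}\theta(v_1,\dots,\hat v_i,\dots,v_{j-1},\rho(Tv_i)v_j,v_{j+1},\dots,v_{k+1}).
\]
The computation of $\dM\theta$ is immediate from \eqref{diff}; it contributes the $[v_i,v_j]_\h$ terms with sign $(-1)^{k-1+i}$.

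Next I would multiply $\partial_T\theta$ in \eqref{eq:defpt} by $(-1)^{k-1}$ and compare with the four packets above: the $T(\rho(\theta(\cdots))u_{k+1})$ term matches (a); the two bracket summands combine using $[x,y]_\g=-[y,x]_\g$ to reproduce (b) over the full range $i=1,\dots,k+1$; the $\rho(Tu_i)u_j$ part of the double sum matches (c) via the sign identity $(-1)^{k+i-1}=(-1)^{k+1-i}$; and the $[u_i,u_j]_\h$ part of the double sum matches the $\dM\theta$ contribution via $(-1)^{k+i-1}=(-1)^{k-1+i}$. Assembling these identifications proves the claim.

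The only real obstacle is sign bookkeeping, in particular keeping the shuffle signs in \eqref{gla-embedding} straight when $m=1$, and correctly converting the $[\theta(\dots),Tu_{k+1}]_\g$ summand of $\partial_T$ into the $i=k+1$ case of the $[Tv_i,\theta(\dots)]_\g$ pattern. There is no conceptual difficulty: since Theorem~\ref{dgla-deforamtion-homotopy} already exhibits the Loday–Pirashvili coboundary as a twist of the Balavoine derived bracket evaluated at the Maurer–Cartan element $T$, the identity \eqref{differential-coho} is essentially forced up to an overall sign, and that sign is pinned down by the shuffle conventions.
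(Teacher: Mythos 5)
Your proposal is correct and follows essentially the same route as the paper's own proof: a direct term-by-term expansion of $d_T\theta=\dM\theta+\Courant{T,\theta}$ using \eqref{diff} and \eqref{gla-embedding} with $m=1$, $n=k$, followed by a sign comparison with $(-1)^{k-1}\partial_T\theta$ from \eqref{eq:defpt}. Your collapsed shuffle sums and all the signs (including the antisymmetry step turning $[\theta(\cdots),Tu_{k+1}]_\g$ into the $i=k+1$ term of the bracket sum) check out against the paper's computation.
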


\begin{proof}
For all $u_1,u_2,\cdots,u_{k+1}\in \h$ and $\theta\in \Hom(\otimes^k\h,\g)$, we have
\begin{eqnarray*}
&&(d_{T}\theta)(u_1,u_2,\cdots,u_{k+1})\\
&=&(\dM \theta+\Courant{T,\theta})(u_1,u_2,\cdots,u_{k+1})\\
                                &=&\sum_{1\le i< j\le k+1}(-1)^{k-1+i} \theta(u_1,\cdots,\hat{u}_i,\cdots,u_{j-1},[u_i,u_j]_{\h},u_{j+1},\cdots,u_{k+1})-T\big(\rho(\theta (u_{1},\cdots,u_{k}))u_{k+1}\big)\\
&&\nonumber+\sum_{i=1}^{k+1}(-1)^{k+1}(-1)^{i-1}[Tu_{i},\theta (u_1,\cdots,\hat{u}_i,\cdots, u_{k+1})]_{\g}\\
&&\nonumber+\sum_{1\le i< j\le k+1}(-1)^{(j-1+k-1)}(-1)^{j-i-1}\theta (u_1,\cdots,\hat{u}_i,\cdots,u_{j-1},\rho(T(u_{i}))u_{j},u_{j+11},\cdots,u_{k+1}),
\end{eqnarray*}
which implies that $\partial_T\theta=(-1)^{k-1}d_{T}\theta$. The proof is finished.
\end{proof}

\section{Linear deformations of    nonabelian  embedding tensors}\label{Linear-Deformation}
\label{sec:def}
%\subsection{Linear  deformations of a nonabelian  embedding tensor}
In  this section, we study linear  deformations of a nonabelian  embedding tensor using the cohomology  theory  given in the
previous section. In particular, we introduce the notion of a
Nijenhuis element associated to a nonabelian  embedding tensor, which gives
rise to a trivial linear deformation of the
nonabelian  embedding tensor.  %\yh{We replace `pre-Lie' by Leibniz (not Leibniz-dendriform! we do not use Leibniz-dendriform algebra in this paper). There is an immediate question: Is linear deformations (Nijenhuis operator) of Leibniz algebras studied in the literature? If yes, we cite and establish the relation. If not, we add this simple part and establish the relation. Please check. }

    \begin{defi}
    Let $T$   be a nonabelian  embedding tensor  on a Lie algebra $\g$ with respect to a coherent action  $(\h;\rho)$ and $\frkT:\h\to\g$ a linear map. If  $T_t=T+t\frkT$ is still a nonabelian  embedding tensor on the Lie algebra $\g$ with respect to the coherent action  $(\h;\rho)$ for all $t\in\C$, we say that $\frkT$ generates a {\bf linear deformation} of the nonabelian  embedding tensor $T$.
    \end{defi}
By considering the coefficients of $t$ and $t^2$, one deduce that $T_t=T+t\frkT$ is a linear deformation of a nonabelian  embedding tensor $T$ if and only if
 for any $u,v\in \h$,
\begin{eqnarray}
~[Tu,\frkT v]_\g+[\frkT u,Tv]_\g&=&T(\rho(\frkT u)v)+\frkT(\rho(Tu)v+[u,v]_\h),\mlabel{eq:deform1}\\
~[\frkT u,\frkT v]_\g&=&\frkT(\rho(\frkT u)v).
\mlabel{eq:deform2}
\end{eqnarray}
Note that Eq.~\eqref{eq:deform1} means that $\frkT$ is a 1-cocycle of the Leibniz algebra $(\h,[\cdot,\cdot]_T)$ with coefficients in the representation $(\g;\rho^L,\rho^R)$   given by Lemma \ref{new-rep} and Eq.~\eqref{eq:deform2} means that $\frkT$ is an embedding tensor on the Lie algebra $\g$ with respect to the representation $(\h;\rho)$.

 Let $(\la,[-,-])$ be a Leibniz algebra and $\omega:\otimes^2\la\to \la$ be a linear map. If for any $t\in\C$, the multiplication $[-,-]_t$ defined by
$$
[u,v]_t=[u,v]+t\omega(u,v), \;\forall u,v\in \la,
$$
also gives a Leibniz algebra structure, we say that $\omega$ generates a {\bf linear deformation} of the Leibniz algebra $(\la,[-,-])$.

The two types of linear deformations are related as follows.

\begin{pro}
 If $\frkT$ generates a linear deformation of a nonabelian  embedding tensor  $T$ on a Lie algebra $\g$ with respect to a coherent action  $(\h;\rho)$, then the product $\omega_\frkT$ on $\h$ defined by
   $$
   \omega_\frkT(u,v)=\rho(\frkT u)v,\quad\forall u,v\in \h,
   $$
generates a linear deformation of the associated Leibniz algebra $(\h,[-,-]_{T})$.
\end{pro}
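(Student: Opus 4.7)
The plan is to observe that this proposition is an essentially immediate consequence of Corollary \ref{average-Leibniz} applied to the family $T_t = T + t\frkT$.

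First I would unwind the definitions. By hypothesis, for every $t\in\C$ the linear map $T_t = T + t\frkT$ is a nonabelian embedding tensor on $\g$ with respect to $(\h;\rho)$. Corollary \ref{average-Leibniz} then produces a descendent Leibniz algebra $(\h,[-,-]_{T_t})$ with bracket
\begin{eqnarray*}
[u,v]_{T_t} = \rho(T_t u)v + [u,v]_\h.
\end{eqnarray*}

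Next I would use the linearity of $\rho$ to expand this bracket and compare it with the candidate deformation:
\begin{eqnarray*}
[u,v]_{T_t} &=& \rho(Tu)v + t\,\rho(\frkT u)v + [u,v]_\h \\
&=& \bigl(\rho(Tu)v + [u,v]_\h\bigr) + t\,\omega_\frkT(u,v) \\
&=& [u,v]_T + t\,\omega_\frkT(u,v).
\end{eqnarray*}
Thus the bracket $[-,-]_T + t\,\omega_\frkT$ coincides with the Leibniz bracket $[-,-]_{T_t}$ produced by Corollary \ref{average-Leibniz}, so it satisfies the Leibniz identity for every $t\in\C$. By the definition of a linear deformation of a Leibniz algebra, this shows that $\omega_\frkT$ generates a linear deformation of $(\h,[-,-]_T)$.

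There is no real obstacle here: the content of the statement is precisely that the functorial association $T\mapsto (\h,[-,-]_T)$ sends linear deformations of $T$ to linear deformations of the descendent Leibniz algebra, and this is built into the formula for $[-,-]_T$ because $T$ appears linearly inside $\rho(T-)$. One could alternatively verify the Leibniz identity at the level of the coefficients of $t^0, t^1, t^2$ using the two equations \eqref{eq:deform1} and \eqref{eq:deform2} characterizing a linear deformation of $T$, but this direct approach via the descendent construction is cleaner and avoids redoing the computation of Proposition \ref{nonabelian hemisemidirect} and Corollary \ref{average-Leibniz}.
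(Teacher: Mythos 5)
Your proof is correct and follows essentially the same route as the paper: both expand the descendent Leibniz bracket of $T_t=T+t\frkT$ using linearity of $\rho$ to identify it with $[-,-]_T+t\,\omega_\frkT$, so that Corollary \ref{average-Leibniz} supplies the Leibniz identity for every $t$. No gaps.
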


\begin{proof}
We denote by $[-,-]_t$ the corresponding Leibniz algebra structure associated to the nonabelian  embedding tensor $T+t\frkT$. Then we have
\begin{eqnarray*}
[u,v]_t&=&\rho((T+t\frkT)u)v+[u,v]_\h\\
&=&\rho(Tu)v+[u,v]_\h+t\rho(\frkT
u)v\\
&=&[u,v]_T+t\omega_\frkT (u,v), \quad \forall u,v\in V,
\end{eqnarray*}
which implies that $\omega_\frkT$ generates a  linear deformation of $(\h,[-,-]_{T})$.\end{proof}

\begin{defi} Let $T$ be a nonabelian  embedding tensor on a Lie algebra $\g$ with respect to a coherent action  $(\h;\rho)$. Two
linear deformations $T^1_t=T+t\frkT_1$ and
$T^2_t=T+t\frkT_2$ are said to be {\bf equivalent} if there exists
an $x\in\g$ such that $({\Id}_\g+t\ad_x,{\Id}_\h+t\rho(x))$ is a
homomorphism   from $T^2_t$ to $T^1_t$. In particular, a
 linear deformation $T_t=T+t\frkT$ of a
nonabelian  embedding tensor $T$ is said to be {\bf trivial} if there exists
an $x\in\g$ such that $({\Id}_\g+t\ad_x,{\Id}_\h+t\rho(x))$ is a
homomorphism   from $T_t$ to $T$.
\end{defi}

Let $({\Id}_\g+t\ad_x,{\Id}_\h+t\rho(x))$ be a homomorphism from
$T^2_t$ to $T^1_t$. Then ${\Id}_\g+t\ad_x$ and ${\Id}_\h+t\rho(x)$ are  Lie algebra
endomorphisms of $(\g,[-,-]_\g)$ and $(\h,[-,-]_\h)$ respectively. Thus, we have
\begin{eqnarray*}
({\Id}_\g+t\ad_x)[y,z]_\g&=&[({\Id}_\g+t\ad_x)(y),({\Id}_\g+t\ad_x)(z)]_\g, \;\forall y,z\in \g,\\
({\Id}_\h+t\rho(x))[u,v]_\h&=&[({\Id}_\h+t\rho(x))(u),({\Id}_\h+t\rho(x))(v)]_\h, \;\forall u,v\in\h,
\end{eqnarray*}
  which implies that $x$ satisfies
\begin{equation}
 [[x,y]_\g,[x,z]_\g]_\g=0,\quad \forall y,z\in\g.
 \mlabel{eq:Nij1}
 \end{equation}
Then by Eq.~\eqref{defi:isocon1}, we get
$$
(T+t\frkT_1)({\Id}_\h+t\rho(x))(u)=({\Id}_\g+t\ad_x)(T+t\frkT_2)(u),\quad\forall u\in \h,
$$
which implies
\begin{eqnarray}
 (\frkT_2-\frkT_1)(u)&=&T\rho(x)u-[x,Tu]_\g,\mlabel{eq:deforiso1} \\
  \frkT_1\rho(x)(u)&=&[x,\frkT_2u]_\g, \; \forall u\in \h.
  \mlabel{eq:deforiso2}
\end{eqnarray}
Finally, Eq.~\eqref{defi:isocon3} gives
$$
({\Id}_\h+t\rho(x))\rho(y)(u)=\rho(({\Id}_\g+t\ad_x)(y))({\Id}_\h+t\rho(x))(u),\quad \forall y\in\g, u\in \h,
$$
which implies that $x$ satisfies
\begin{equation}
  \rho([x,y]_\g)\rho(x)=0,\quad\forall y\in\g.\mlabel{eq:Nij3}
\end{equation}
Note that Eq.~\eqref{eq:deforiso1} means that $\frkT_2-\frkT_1=\partial_T x$. Thus, we have the following result:

\begin{thm}\label{thm:iso3} Let $T$ be a  nonabelian  embedding tensor  on a Lie algebra $\g$ with respect to a coherent action  $(\h;\rho)$.
  If two linear deformations $T^1_t=T+t\frkT_1$ and $T^2_t=T+t\frkT_2$ are equivalent, then $\frkT_1$ and $\frkT_2$ are in the same cohomology class of $ \huaH^2(T)$.
\end{thm}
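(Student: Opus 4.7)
The plan is to recognize \eqref{eq:deform1} as the cocycle equation $\partial_T\frkT_i=0$ and \eqref{eq:deforiso1} as the coboundary equation $\frkT_2-\frkT_1=\partial_T x$; both identifications follow directly from the explicit formula \eqref{eq:defpt}, so essentially all of the work has already been carried out in the discussion preceding the theorem.

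First I would specialize \eqref{eq:defpt} to $k=1$: for $\theta\in\frkC^2(T)=\Hom(\h,\g)$, the only surviving terms give
\begin{eqnarray*}
(\partial_T\theta)(u,v)=[Tu,\theta(v)]_\g+[\theta(u),Tv]_\g-T\bigl(\rho(\theta(u))v\bigr)-\theta\bigl(\rho(Tu)v+[u,v]_\h\bigr).
\end{eqnarray*}
Setting $(\partial_T\theta)(u,v)=0$ recovers \eqref{eq:deform1}. Since both $T^1_t$ and $T^2_t$ are linear deformations, the cochains $\frkT_1,\frkT_2$ satisfy this identity and therefore lie in $\huaZ^2(T)$.

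Next I would specialize \eqref{eq:defpt} to $k=0$: for $x\in\frkC^1(T)=\g$ the first and last sums are empty and the remaining two terms yield
\begin{eqnarray*}
(\partial_T x)(u)=-[x,Tu]_\g+T\rho(x)u.
\end{eqnarray*}
This is precisely the right-hand side of \eqref{eq:deforiso1}, which is the necessary consequence of the equivalence hypothesis $\bigl(\Id_\g+t\ad_x,\,\Id_\h+t\rho(x)\bigr)$ intertwining $T^2_t$ and $T^1_t$. Hence $\frkT_2-\frkT_1=\partial_T x\in\huaB^2(T)$, and therefore $[\frkT_1]=[\frkT_2]$ in $\huaH^2(T)$.

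The argument presents no serious obstacle; it is essentially a translation of identities derived in the preamble to the statement into cohomological language. The only mild subtleties are keeping straight the index shift $\frkC^n(T)=\Hom(\otimes^{n-1}\h,\g)$ relative to the standard Loday--Pirashvili indexing, and checking that the representation of $(\h,[-,-]_T)$ on $\g$ underlying $\partial_T$ is the one furnished by Lemma \ref{new-rep}. The supplementary equations \eqref{eq:deforiso2}, \eqref{eq:Nij1}, \eqref{eq:Nij3} extracted from the equivalence play no role in the present cohomological conclusion and can safely be ignored here.
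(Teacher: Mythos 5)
Your proposal is correct and follows exactly the paper's route: the paper likewise reads \eqref{eq:deform1} as the cocycle condition coming from \eqref{eq:defpt} with $k=1$, derives \eqref{eq:deforiso1} from the equivalence, and observes that it says $\frkT_2-\frkT_1=\partial_T x$, which is the whole proof. Your sign checks against \eqref{eq:defpt} for $k=0,1$ are accurate, and you are right that \eqref{eq:deforiso2}, \eqref{eq:Nij1}, \eqref{eq:Nij3} are not needed for this conclusion.
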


    \begin{defi}
Let $T$ be a   nonabelian  embedding tensor on a Lie algebra $\g$ with respect to a coherent action  $(\h;\rho)$. An element $x\in\g$ is called a {\bf Nijenhuis element} associated to $T$ if $x$ satisfies Eqs.~\eqref{eq:Nij1}, \eqref{eq:Nij3} and the equation
      \begin{eqnarray}
        ~[x,T\rho(x)u-[x,Tu]_\g]_\g=0,\quad \forall u\in \h.
         \mlabel{eq:Nijenhuis}
        \end{eqnarray}
   Denote by $\Nij(T)$ the set of Nijenhuis elements associated to a nonabelian  embedding tensor $T$.
    \end{defi}

By Eqs.~\eqref{eq:Nij1}-\eqref{eq:Nij3}, it is obvious that a
trivial linear deformation gives rise to a
Nijenhuis element. Conversely, a Nijenhuis element can also generate a trivial linear deformation as the following theorem shows.

  \begin{thm}\label{thm:trivial}
   Let $T$ be a nonabelian  embedding tensor on a Lie algebra $\g$ with respect to a coherent action  $(\h;\rho)$. Then for any  $x\in \Nij(T)$, $T_t=T+t \frkT$ with $\frkT=\partial_T x$ is a trivial linear  deformation of the nonabelian  embedding tensor  $T$.
\end{thm}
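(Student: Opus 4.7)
The plan is to verify two assertions in sequence: first, that the pair $(\Id_\g + t\ad_x, \Id_\h + t\rho(x))$ consists of Lie algebra endomorphisms of $\g$ and $\h$ respectively and intertwines $T_t$ with $T$ in the sense of Definition \ref{defi:isoO}; second, that $T_t$ is itself a nonabelian embedding tensor for every $t \in \C$. Combining these two facts will yield exactly the statement that $T_t$ is a trivial linear deformation of $T$. As a preliminary, applying formula \eqref{eq:defpt} to $x \in \g = \frkC^1(T)$ with $k = 0$ immediately gives
$$\frkT(u) \;=\; \partial_T x(u) \;=\; T\rho(x)u - [x, Tu]_\g, \qquad u \in \h.$$

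For the Lie algebra endomorphism part of the first assertion, I expand the homomorphism equations in powers of $t$. For $\Id_\g + t\ad_x$, the $t^1$ term is the Jacobi identity in $\g$, and the $t^2$ term reduces to $[[x,y]_\g,[x,z]_\g]_\g = 0$, which is exactly \eqref{eq:Nij1}. For $\Id_\h + t\rho(x)$, the $t^1$ term holds because $\rho(x) \in \Der(\h)$, and the $t^2$ term $[\rho(x)u,\rho(x)v]_\h = 0$ is immediate from coherence \eqref{eq:extracon}. For the intertwining part, expanding \eqref{defi:isocon1} applied to $T_t$ and $T$, the $t^1$ coefficients agree because of the formula for $\frkT$ computed above, and the $t^2$ coefficient $[x, \frkT(u)]_\g = [x, T\rho(x)u - [x, Tu]_\g]_\g$ vanishes by \eqref{eq:Nijenhuis}. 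Expanding \eqref{defi:isocon3}, the $t^1$ part holds since $\rho$ is a Lie algebra homomorphism, and the $t^2$ part $\rho([x,y]_\g)\rho(x)u = 0$ is \eqref{eq:Nij3}.

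For the second assertion, the cleanest route is to observe that for all but finitely many $t \in \C$ both $\Id_\g + t\ad_x$ and $\Id_\h + t\rho(x)$ are linear isomorphisms; the first assertion then presents $T_t$ as a conjugation of the nonabelian embedding tensor $T$ by a compatible pair of Lie algebra isomorphisms intertwining $\rho$, so $T_t$ is itself a nonabelian embedding tensor for such $t$. Since the defining identity $[T_t u, T_t v]_\g - T_t\rho(T_t u)v - T_t[u,v]_\h = 0$ is polynomial of degree two in $t$ and vanishes on a Zariski-dense subset of $\C$, it must vanish identically. The main obstacle I anticipate is precisely this polynomial-identity step: it is routine but deserves care in the nonabelian setting. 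As a fallback one can verify the two nontrivial coefficients directly, noting that the $t^1$ coefficient is $\partial_T \frkT = \partial_T^2 x = 0$ automatically, while the quadratic identity $[\frkT u, \frkT v]_\g = \frkT\rho(\frkT u)v$ reduces, after substituting $\frkT u = T\rho(x)u - [x,Tu]_\g$, to a finite sum of terms that can be annihilated using Jacobi in $\g$, the nonabelian embedding tensor equation for $T$, coherence, and the three Nijenhuis identities \eqref{eq:Nij1}, \eqref{eq:Nij3}, and \eqref{eq:Nijenhuis} in concert.
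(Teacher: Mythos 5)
Your proposal is correct and follows essentially the same route as the paper: verify via the Nijenhuis identities that $(\Id_\g+t\ad_x,\Id_\h+t\rho(x))$ are Lie algebra endomorphisms intertwining $T_t$ with $T$, invoke the conjugation lemma (Lemma \ref{lem:isomorphism}) to get that $T_t$ is a nonabelian embedding tensor for generic $t$, and then extend to all $t$ by polynomiality. The only cosmetic difference is that you check $(\Id_\g+t\ad_x)\circ T_t=T\circ(\Id_\h+t\rho(x))$ coefficient by coefficient, whereas the paper expands $(\Id_\g+t\ad_x)^{-1}\circ T\circ(\Id_\h+t\rho(x))$ as a series and collapses it to $T+t\frkT$ using \eqref{eq:Nijenhuis}.
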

We need the following lemma to prove this theorem.

\begin{lem}\label{lem:isomorphism}
Let $T$ be a nonabelian  embedding tensor on a Lie algebra $\g$ with respect to a coherent action  $(\h;\rho)$.  Let $\phi_\g:\g\to\g$ and $\phi_\h:\h\to \h$ be two Lie algebra isomorphisms  such that Eq.~\eqref{defi:isocon1} and Eq.~\eqref{defi:isocon3} hold. Then $\phi_\g^{-1}\circ{T}\circ\phi_\h$ is a nonabelian  embedding tensor on the Lie algebra $\g$ with respect to the coherent action  $(\h;\rho)$.
\end{lem}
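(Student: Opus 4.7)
The plan is to verify directly that $T' := \phi_\g^{-1}\circ T\circ \phi_\h$ satisfies the defining identity \eqref{eq:nonET}. The argument is pure transport-of-structure: the three hypotheses (that $\phi_\g$ preserves $[-,-]_\g$, that $\phi_\h$ preserves $[-,-]_\h$, and that $(\phi_\g,\phi_\h)$ intertwines $\rho$ via \eqref{defi:isocon3}) together with invertibility of the two maps should let the embedding tensor equation for $T$, applied at $(\phi_\h(u),\phi_\h(v))$, descend to the corresponding equation for $T'$ at $(u,v)$.

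Concretely, first I would use that $\phi_\g$ is a Lie algebra homomorphism to pull $\phi_\g^{-1}$ out of the outer bracket:
\[
[T'u, T'v]_\g = [\phi_\g^{-1}T\phi_\h(u),\, \phi_\g^{-1}T\phi_\h(v)]_\g = \phi_\g^{-1}[T\phi_\h(u),\, T\phi_\h(v)]_\g.
\]
Next I would apply \eqref{eq:nonET} to $T$ at the pair $\phi_\h(u),\phi_\h(v)\in\h$ to rewrite the inner bracket as $T\bigl(\rho(T\phi_\h(u))\phi_\h(v) + [\phi_\h(u),\phi_\h(v)]_\h\bigr)$.

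Then I would simplify the two summands separately. The bracket summand collapses via $[\phi_\h(u),\phi_\h(v)]_\h = \phi_\h[u,v]_\h$ since $\phi_\h$ is a Lie algebra homomorphism. For the $\rho$-summand, I would rewrite \eqref{defi:isocon3} in the equivalent form $\rho(y)\phi_\h(w) = \phi_\h\rho(\phi_\g^{-1}(y))w$ (valid because $\phi_\g$ is invertible), and specialize to $y = T\phi_\h(u)$, $w = v$, obtaining $\rho(T\phi_\h(u))\phi_\h(v) = \phi_\h\rho(T'u)v$. Both summands are thus of the form $\phi_\h(\cdots)$, and the outer $\phi_\g^{-1}T$ assembles them into
\[
[T'u,T'v]_\g = \phi_\g^{-1}T\phi_\h\bigl(\rho(T'u)v + [u,v]_\h\bigr) = T'\bigl(\rho(T'u)v + [u,v]_\h\bigr),
\]
which is exactly \eqref{eq:nonET} for $T'$.

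The only delicate point I anticipate is keeping track of the direction of \eqref{defi:isocon3}: it is stated as $\phi_\h\rho(x)u = \rho(\phi_\g(x))\phi_\h(u)$, so when the argument of $\rho$ is already something like $T\phi_\h(u)$ rather than $\phi_\g$ of a known element, one must remember to apply $\phi_\g^{-1}$ to that argument. This is where the \emph{isomorphism} hypothesis (as opposed to merely a homomorphism) is used. Apart from this bookkeeping and the fact that the coherence condition \eqref{eq:extracon} plays no role here, the verification is mechanical and I do not foresee any substantive obstacle.
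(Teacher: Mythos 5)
Your verification is correct and is precisely the ``straightforward computation'' that the paper leaves to the reader: transport $[-,-]_\g$ through $\phi_\g^{-1}$, apply \eqref{eq:nonET} at $(\phi_\h(u),\phi_\h(v))$, and use \eqref{defi:isocon3} (inverted via the isomorphism hypothesis) together with the homomorphism property of $\phi_\h$ to factor $\phi_\h$ out of both summands. Your side remarks — that the coherence condition \eqref{eq:extracon} is not needed and that invertibility of $\phi_\g$ enters only in reversing \eqref{defi:isocon3} — are also accurate.
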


\begin{proof}
 It follows from
straightforward computations.
\end{proof}

{\bf The proof of Theorem \ref{thm:trivial}:} %One can show it directly by a tedious computation. Here we take a different approach.
For any Nijenhuis element $x\in\Nij({T})$, we define a linear map $\frkT:\h\to \g$ by
 \begin{eqnarray}\label{trivial-genetator}
\frkT(u)=(\partial_T x)(u)=T\rho(x)u-[x,Tu]_\g,~\forall u\in\h.
 \end{eqnarray}
Let $T_t=T+t \frkT$. By the definition of Nijenhuis elements, for all $y,z\in\g, u,v\in \h$,  we have
\begin{eqnarray*}
      ({\Id}_\g+t\ad_x)[y,z]_\g&\stackrel{\eqref{eq:Nij1}}{=}&[({\Id}_\g+t\ad_x)(y),({\Id}_\g+t\ad_x)(z)]_\g,\\
      ({\Id}_\h+t\rho(x))[u,v]_\h&\stackrel{\eqref{eq:extracon}}{=}&[({\Id}_\h+t\rho(x))(u),({\Id}_\h+t\rho(x))(v)]_\h,\\
      ({\Id}_\g+t\ad_x)\circ T_t&\stackrel{\eqref{eq:Nijenhuis}}{=}&T\circ({\Id}_\h+t\rho(x)),\\
        ({\Id}_\h+t\rho(x))\rho(y)u&\stackrel{\eqref{eq:Nij3}}{=}&\rho( ({\Id}_\g+t\ad_x)y)({\Id}_\h+t\rho(x))(u).
\end{eqnarray*}
Since $\ad_x$ and $\rho(x)$ are linear transformations of finite-dimensional $\C$-vector spaces, it follows that  ${\Id}_\g+t\ad_x$ and ${\Id}_\h+t\rho(x)$ are Lie algebra isomorphisms for $|t|$ sufficiently small. Thus, by Lemma \ref{lem:isomorphism},
\begin{eqnarray*}
&&({\Id}_\g+t\ad_x)^{-1}\circ{T}\circ ({\Id}_\h+t\rho(x))\\
   &=&\sum_{i=0}^{+\infty}(-t\ad_x)^i\circ(T+tT\rho(x))\\
   &=&T+t(T\rho(x)-\ad_x\circ T)+\sum_{i=1}^{+\infty}(-1)^it^{i+1}\ad_{x}^i\circ(T\rho(x)-\ad_x\circ T)\\
   &\stackrel{\eqref{eq:Nijenhuis}}{=}&T+t(T\rho(x)-\ad_x\circ T)=T+t \frkT=T_t
\end{eqnarray*}
 is a nonabelian  embedding tensor on the Lie algebra $(\g,[-,-]_\g)$ with respect to the coherent  action $(\h;\rho)$ for $|t|$ sufficiently small.
  Thus, $\frkT$
  given by
Eq.~\eqref{trivial-genetator} satisfies the conditions
\eqref{eq:deform1} and \eqref{eq:deform2}. Therefore,
${T}_t$ is a nonabelian  embedding tensor for all
$t$, which means that $\frkT$
given by
Eq.~\eqref{trivial-genetator} generates a linear deformation of $T$. It is straightforward to see  that
this linear deformation is trivial.\qed\vspace{2mm}

Now we recall the notion of a Nijenhuis operator on a Leibniz algebra, which gives rise to a trivial linear deformation of a Leibniz algebra.

\begin{defi}{\rm (\cite{CGM})}
  A linear map $N:\la\to \la$ on a Leibniz algebra $(\la,[-,-]_\la) $ is called a {\bf Nijenhuis operator} if
  \begin{equation}
   [Nu,Nv]_\la=N([Nu,v]_\la+[u,Nv]_\la-N[u,v]_\la),\quad u,v\in\la.
  \end{equation}
\end{defi}

For its connection with a Nijenhuis element associated to
a nonabelian  embedding tensor, we have the following result.

\begin{pro}
Let $x\in\g$ be a Nijenhuis element associated to a
nonabelian  embedding tensor $T$ on a Lie algebra $\g$ with respect to a coherent action  $(\h;\rho)$. Then $\rho(x)$ is a Nijenhuis operator
on the descendent Leibniz algebra $(\h,[-,-]_{T})$.
\end{pro}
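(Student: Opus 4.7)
The plan is to verify the Nijenhuis identity
\[
[\rho(x)u, \rho(x)v]_T \;=\; \rho(x)\bigl([\rho(x)u, v]_T + [u, \rho(x)v]_T - \rho(x)[u,v]_T\bigr)
\]
by computing both sides explicitly from the formula $[u,v]_T=\rho(Tu)v+[u,v]_\h$ and collapsing terms via the coherence condition \eqref{eq:extracon}, the derivation property of $\rho(x)\in\Der(\h)$, and the three defining equations of a Nijenhuis element.

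First I would simplify the left-hand side. Since $[u,v]_T=\rho(Tu)v+[u,v]_\h$ and $\rho(x)u,\rho(x)v$ both lie in the image of $\rho(x)$, the coherence condition kills the $\h$-bracket, giving $[\rho(x)u,\rho(x)v]_T=\rho(T\rho(x)u)\rho(x)v$. Next I would expand the three terms that make up the argument of the outer $\rho(x)$ on the right-hand side; using coherence (and antisymmetry of $[-,-]_\h$) to discard the $[-,-]_\h$-pieces in $[\rho(x)u,v]_T$ and $[u,\rho(x)v]_T$, and using $\rho(x)\in\Der(\h)$ together with coherence to kill $\rho(x)[u,v]_\h$, the inner bracket collapses to
\[
\rho(T\rho(x)u)v+\rho(Tu)\rho(x)v-\rho(x)\rho(Tu)v=\rho(T\rho(x)u)v+\rho([Tu,x]_\g)v=\rho(\frkT u)v,
\]
where $\frkT u=T\rho(x)u-[x,Tu]_\g=(\partial_T x)(u)$ is precisely the generator of the trivial deformation of Theorem~\ref{thm:trivial}. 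So the required identity becomes $\rho(T\rho(x)u)\rho(x)v=\rho(x)\rho(\frkT u)v$.

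The main obstacle is proving this last equality, and this is where all three Nijenhuis conditions come in. Using that $\rho:\g\to\Der(\h)$ is a Lie algebra homomorphism, I rewrite
\[
\rho(T\rho(x)u)\rho(x)=\rho(x)\rho(T\rho(x)u)-\rho([x,T\rho(x)u]_\g).
\]
By the Nijenhuis condition \eqref{eq:Nijenhuis}, $[x,T\rho(x)u]_\g=[x,[x,Tu]_\g]_\g$, so the last $\rho$-term equals $\rho([x,[x,Tu]_\g]_\g)=[\rho(x),\rho([x,Tu]_\g)]=\rho(x)\rho([x,Tu]_\g)-\rho([x,Tu]_\g)\rho(x)$, and the second summand vanishes by \eqref{eq:Nij3}. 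Substituting back yields $\rho(T\rho(x)u)\rho(x)=\rho(x)\bigl(\rho(T\rho(x)u)-\rho([x,Tu]_\g)\bigr)=\rho(x)\rho(\frkT u)$, which applied to $v$ is exactly what we need. Condition \eqref{eq:Nij1} is not used directly here; it enters only through guaranteeing that $x$ is a genuine Nijenhuis element, and could in principle be used as a sanity check by comparing with the alternative route going through Theorem~\ref{thm:trivial} and the deformed bracket $[u,v]_{T_t}=[u,v]_T+t\,\rho(\frkT u)v$, which gives another clean conceptual reason why the Nijenhuis torsion of $\rho(x)$ on $(\h,[-,-]_T)$ has the form displayed above.
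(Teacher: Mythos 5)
Your proof is correct and follows essentially the same route as the paper's: expand both sides via $[u,v]_T=\rho(Tu)v+[u,v]_\h$, kill all $[-,-]_\h$ terms with the coherence condition, and then use the homomorphism property of $\rho$ together with \eqref{eq:Nijenhuis} and \eqref{eq:Nij3} to make the remaining operator terms cancel. The only difference is cosmetic — you package the inner bracket as $\rho(\frkT u)v$ with $\frkT=\partial_T x$, which is a nice conceptual gloss but yields the identical computation.
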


\begin{proof}
For all $u,v\in \h$, we have
    \begin{eqnarray*}
    &&\rho(x)([\rho(x)u,v]_T+[u,\rho(x)v]_T-\rho(x)[u,v]_T)-[\rho(x)u,\rho(x)v]_T\\
&\stackrel{\eqref{new-leibniz}}{=}&\rho(x)\Big(\rho(T\rho(x)u)v+[\rho(x)u,v]_\h+\rho(Tu)\rho(x)v+[u,\rho(x)v]_\h-\rho(x)(\rho(Tu)v+[u,v]_\h)\Big)\\
&&-\rho(T\rho(x)u)\rho(x)v-[\rho(x)u,\rho(x)v]_\h\\
&\stackrel{\eqref{eq:extracon}}{=}&\rho(x)\big(\rho(T\rho(x)u)v+\rho(Tu)\rho(x)v-\rho(x)\rho(Tu)v\big)-\rho(T\rho(x)u)\rho(x)v\\
&=&\rho([x,T\rho(x)u]_\g)v+\rho(x)\rho([Tu,x]_\g)v\\
&\stackrel{\eqref{eq:Nijenhuis}}{=}&\rho([x,[x,Tu]_\g]_\g)v+\rho(x)\rho([Tu,x]_\g)v{=}-\rho([x,Tu]_\g)\rho(x)v\\
&\stackrel{\eqref{eq:Nij3}}{=}&0.
    \end{eqnarray*}
Thus, we deduce that $\rho(x)$ is a Nijenhuis operator
on the Leibniz algebra $(\h,[-,-]_{T})$.
\end{proof}

We end this  section by giving a concrete example of a linear deformation  of a nonabelian  embedding tensor $T$ on the Heisenberg Lie algebra $H_3(\mathbb C)$.

\begin{ex}{\rm
Let $H_3(\mathbb C)$ be the Heisenberg Lie algebra. Then $T_t=\left(\begin{array}{ccc}\frac{t+\sqrt{t^2+4t}}{2}&0&0\\
0&\frac{t+\sqrt{t^2+4t}}{2}&0\\
a&b&t\end{array}\right)$
is a linear deformation  of the  nonabelian  embedding tensor $T=\left(\begin{array}{ccc}0&0&0\\
0&0&0\\
a&b&0\end{array}\right)$ on $H_3(\mathbb C)$ with respect to the adjoint action for all $a,b\in\mathbb C$.
}
\end{ex}

\noindent
{\bf A conflict of interest statement.}
On behalf of all authors, the corresponding author states that there is no conflict of interest.

\noindent
{\bf Data availability.} Data sharing not applicable to this article as no datasets were generated or analysed during the current study.

\vspace{2mm}
\noindent
{\bf Acknowledgements.} This research is supported by NSFC (11922110,12001228). We give warmest thank to Jim Stasheff, Thomas Strobl and Chenchang Zhu for helpful comments.


\begin{thebibliography}{a}

%\bibitem{ammardefiLeibnizalgebra}
%M. Ammar and N. Poncin, Coalgebraic approach to the Loday infinity category, stem
%differential for $2n$-ary graded and homotopy algebras. \emph{Ann.
%Inst. Fourier (Grenoble)} {\bf 60}  (2010),  355-387.


\bibitem{Aguiar}
M. Aguiar, Pre-Poisson algebras. {\em Lett. Math. Phys.} {\bf54} (2000),  263-277.

% \bibitem{Arnal}
% D. Arnal,  Simultaneous deformations of a Lie algebra and its modules. Differential geometry and mathematical physics (Li¨¨ge, 1980/Leuven, 1981), 3-15, \emph{Math. Phys. Stud.}, 3, Reidel, Dordrecht, 1983.

%\bibitem{Balavoine}
%D. Balavoine, Homology and cohomology with coefficients, of an algebra over a quadratic operad. {\em J. Pure Appl. Algebra} 132 (1998), no. 3, 221-258.

\bibitem{Bal}
D. Balavoine, Deformations of algebras over a quadratic operad. Operads: Proc. Renaissance Conferences (Hartford, CT/Luminy, 1995), \emph{Contemp. Math.} {\bf 202} Amer. Math. Soc., Providence, RI, 1997, 207-34.

%\bibitem{Borjeson}
%K. B\"{o}rjeson, $A_\infty$-algebras derived from associative algebras with a non-derivation differential. (English summary)
%\emph{J. Gen. Lie Theory Appl.} {\bf9} (2015), Art. ID 1000214, 5 pp.

\bibitem{BND}
E. A. Bergshoeff, T. A. Nutma and I. De Baetselier, $E_{11}$ and the embedding tensor. {\em J. High Energy Phys.}  (2007), 047, 15 pp.

\bibitem{BdH}
E. A. Bergshoeff, M.  de Roo and O. Hohm, Multiple M2-branes and the embedding tensor. {\em Classical Quantum Gravity} {\bf25} (2008),  142001, 10 pp.

%\bibitem{Barmeier}
%S. Barmeier and Y. Fr\'egier, Deformation-obstruction theory for diagrams of algebras and applications to geometry. {\em J. Noncommut. Geom. to appear}. arXiv:1806.05142.

%\bibitem{Cao}
%W. Cao, An algebraic study of averaging operators. \emph{Ph.D. Thesis, Rutgers University at Newark}, 2000.

\bibitem{CGM}
J. F. Cari\~nena, J. Grabowski and G. Marmo, Courant algebroid and Lie bialgebroid contractions.
{\em J. Phys. A} {\bf37} (2004), 5189-5202.
%\bibitem{Chapoton}
%F. Chapoton, On some anticyclic operads. \emph{Algebr. Geom. Topol.} {\bf5} (2005), 53-69.

%\bibitem{Dolgushev-Rogers}
%V. A. Dolgushev and C. L. Rogers, A version of the Goldman-Millson Theorem
%for filtered $L_\infty$-algebras. {\em J. Algebra}  {\bf 430} (2015), 260-302.


\bibitem{Das}
A. Das, Deformations of associative Rota-Baxter operators. \emph{J. Algebra} {\bf560} (2020), 144-180.

%\bibitem{DW}
%B. Dherin and F. Wagemann,   Deformation quantization of Leibniz algebras. \emph{Adv. Math.} {\bf270} (2015), 21-48.

%\bibitem{DSV}
%V. Dotsenko, S. Shadrin and  B. Vallette, Toric varieties of Loday's associahedra and noncommutative cohomological field theories.  \emph{J. Topol.}, {\bf12} (2019), 463-535.


%\bibitem{D} V. Drinfeld, Hamiltonian structure on the Lie groups, Lie bialgebras and the geometric sense of the classical Yang-Baxter equations. {\em Soviet Math. Dokl.} 27 (1983), 68-71.

%\bibitem{EK} P. Etingof and D. Kazhdan, Quantization of Lie bialgebras. I. {\em Selecta Math. (N.S.)} {\bf 2} (1996), 1-41.

%\bibitem{Rmatrix}R. Felipe,  N. L\'opez-Reyes and F. Ongay,   $R$-matrices for Leibniz algebras. \emph{Lett. Math. Phys.} 63 (2003), no. 2, 157-164.


\bibitem{FOT}
J. J. Fernandez-Melgarejo, T. Ortin and  E. Torrente-Lujan,  Maximal nine dimensional supergravity, general gaugings and the embedding tensor. \emph{Fortschr. Phys.} {\bf60} (2012), 1012-1018.

\bibitem{FW}
J. Feldvoss and F. Wagemann, On Leibniz cohomology. \emph{J. Algebra} {\bf569} (2021), 276-317.

%\bibitem{Fialowski}
% A. Fialowski and A. Mandal, Leibniz algebra deformations of a Lie algebra. {\em J. Math. Phys.} 49 (2008), no. 9, 093511, 11 pp.

%\bibitem{Fregier}
%Y. Fr\'egier, M. Markl and D. Yau,   The $L_\infty$-deformation complex of diagrams of algebras. \emph{New York J. Math.} {\bf 15} (2009), 353-392.

%\bibitem{Fregier-Zambon-1}
%Y. Fr\'egier and M. Zambon, Simultaneous deformations and Poisson geometry. \emph{Compos. Math.} {\bf151} (2015), 1763-1790.

%\bibitem{Fregier-Zambon-2}
%Y. Fr\'egier and M. Zambon, Simultaneous deformations of algebras and morphisms via derived brackets. \emph{J. Pure Appl. Algebra} {\bf219 } (2015), 5344-5362.


%\bibitem{FM}
%R. A. S. Fox and J. B. Miller, Averaging and Reynolds operators in Banach algebras. III. Spectral properties of Reynolds operators. \emph{J. Math. Anal. Appl.} {\bf24} (1968), 225-238.

%\bibitem{GM}
%J. L. B. Gamlen and J. B. Miller, Averaging and Reynolds operators on Banach algebras. II. Spectral properties of averaging operators. \emph{J. Math. Anal. Appl.} {\bf23} (1968), 183-197.

%\bibitem{Gerstenhaber-Schack}
%M. Gerstenhaber and S. D. Schack,  Algebraic cohomology and deformation theory. {\em  NATO Advanced Science Institutes Series C: Mathematical and Physical Sciences, 247.} pp. 11-264.

%\bibitem{Ge0}
%M. Gerstenhaber, The cohomology structure of an associative ring. \emph{Ann. Math.} {\bf 78} (1963) 267-288.

\bibitem{Ge}
M. Gerstenhaber, On the deformation of rings and algebras. \emph{Ann. Math. (2) } {\bf 79} (1964), 59-103.

\bibitem{GZ}
X. Gao and T. Zhang, Averaging algebras, rewriting systems and Gr\"obner-Shirshov bases. \emph{J. Algebra Appl.} {\bf17} (2018), 1850130, 26 pp.


%\bibitem{Getzler} E. Getzler, Lie theory for nilpotent $L_{\infty}$-algebras. {\em Ann. of Math. (2)} {\bf170} (2009),  271-301.

%\bibitem{GKO}S. G\'omez-Vidal, A. Khudoyberdiyev and B. A. Omirov, Some remarks on semisimple Leibniz algebras. \emph{J. Algebra} 410 (2014), 526-540.

%\bibitem{Gub}L. Guo,  An introduction to Rota-Baxter algebra. Surveys of Modern Mathematics, 4. International Press, Somerville, MA; Higher Education Press, Beijing, 2012. xii+226 pp.

\bibitem{Goncharov}
M. E. Goncharov and P. S. Kolesnikov, Simple finite-dimensional double algebras. {\em J. Algebra}  {\bf 500} (2018), 425-438.

\bibitem{GK}
V. Y. Gubarev and P. S. Kolesnikov, Operads of decorated trees and their duals. {\em Comment. Math. Univ. Carolin.}  {\bf55} (2014),  421-445.


\bibitem{Kolesnikov}
P. S. Kolesnikov,  Homogeneous averaging operators on semisimple Lie algebras. {\em Algebra Logic} {\bf53} (2014),  510-511.

\bibitem{Kinyon}
M. Kinyon and A. Weinstein,  Leibniz algebras, Courant algebroids and multiplications on reductive homogeneous spaces. \emph{Amer. J. Math.} {\bf123} (2001),  525-550.

%\bibitem{Kosmann-Schwarzbach}
%Y. Kosmann-Schwarzbach, From Poisson algebras to Gerstenhaber algebras. {\em Ann. Inst. Fourier (Grenoble) } {\bf46} (1996),  1243-1274.

 \bibitem{Kosmann-Schwarzbach-1}
 Y. Kosmann-Schwarzbach, Derived brackets. {\em Lett. Math. Phys.} {\bf69} (2004), 61-87.

%\bibitem{Schwarzbach4} Y. Kosmann-Schwarzbach, Courant algebroids. A short history, \emph{SIGMA Symmetry Integrability Geom. Methods Appl.} 9 (2013), Paper 014, 8 pp.

%\bibitem{LWXmani}% Z. Liu, A. Weinstein and P. Xu, Manin triples for Lie bialgebroids, \emph{J. Diff. Geom.} 1997, 45(3): 547-574.

%\bibitem{Roytenberg4} D. Roytenberg, \emph{Courant algebroids, derived brackets and even symplectic supermanifolds}, PhD thesis, UC Berkeley, 1999, arXiv: math.DG/9910078.

%\bibitem{Ku} B. A. Kupershmidt, What a classical $r$-matrix really is. \emph{J. Nonlinear Math. Phys.} {\bf 6} (1999), 448-488.

\bibitem{KS}
A. Kotov and T. Strobl, The embedding tensor, Leibniz-Loday algebras, and their higher gauge theories. \emph{Comm. Math. Phys.} {\bf376}  (2020),  235-258.

\bibitem{Kelley}
J. L.  Kelley, Averaging operators on $C_\infty(X)$. \emph{Illinois J. Math.} {\bf2} (1958), 214-223.

%\bibitem{LS}
%T. Lada and J. Stasheff, Introduction to sh Lie algebras for physicists. \emph{Internat. J. Theoret. Phys.} {\bf32} (1993),  1087-1103.

%\bibitem{LM}
%T. Lada and M. Markl,  Strongly homotopy Lie algebras. \emph{ Comm. Algebra} {\bf23} (1995),  2147-2161.

%\bibitem{LST}A. Lazarev, Y. Sheng and R. Tang, Deformations and homotopy theory  of relative Rota-Baxter Lie algebras,  {\em Comm. Math. Phys. to appear}. The MPIM preprint series, 2020.

%\bibitem{livernet}
%M. Livernet, Homologie des alg$\rm\grave{e}$bres stables de matrices sur une $A_\infty$-alg$\rm\grave{e}$bre, \emph{C. R. Acad. Sci. Paris S$\rm\acute{e}$r. I
%Math.} {\bf329}  (1999), 113-116.

%\bibitem{Loday-Pirashvili}
%J.-L. Loday and T. Pirashvili,
%The tensor category of linear maps and Leibniz algebras.
%\emph{Georgian Math. J.} {\bf5} (1998), 263-276.



\bibitem{Loday and Pirashvili}
J.-L. Loday and T. Pirashvili, Universal enveloping algebras of
Leibniz algebras and (co)homology. \emph{Math. Ann.} {\bf296} (1993),
139-158.

%\bibitem{Lo5} J.-L. Loday, Scindement d'associativt\'{e} et
%alg\'{e}bres de Hopf. in the proceedings of conference in honor of
%Jean Leray, Nantes (2002), {\em S\'{e}minaire et Congr\'{e}s
%(SMF)} {\bf 9} (2004), 155-172.

\bibitem{LavauS}
S. Lavau and J. Stasheff, From differential crossed modules to tensor hierarchies. arXiv:2003.07838v3.


\bibitem{Lavau-1}
S. Lavau, Tensor hierarchies and Leibniz algebras. \emph{J. Geom. Phys.} {\bf144} (2019), 147-189.

\bibitem{Lavau-2}
S. Lavau and J. Palmkvist, Infinity-enhancing of Leibniz algebras. \emph{Lett. Math. Phys.} {\bf110} (2020),  3121-3152.


%\bibitem{LST}
%A. Lazarev, Y. Sheng and R. Tang, Deformations and homotopy theory of relative Rota-Baxter Lie algebras. \emph{Comm. Math. Phys.} {\bf383} (2021), 595-631.

%\bibitem{Ma}
%M. Markl, Deformation Theory of Algebras and Their Diagrams, \emph{Regional Conference Series in Mathematics}, Number 116, American Mathematical Society (2011).

%\bibitem{Markl}
%M. Markl, On the origin of higher braces and higher-order derivations.
%\emph{J. Homotopy Relat. Struct.} {\bf10} (2015),  637-667.

%\bibitem{Loday95}
%J. L. Loday, Cup-product for Leibniz cohomology and dual Leibniz algebras.
%{\em Math. Scand. }77 (1995), no. 2, 189-196.

%\bibitem{NR} A. Nijenhuis  and R. Richardson,  Cohomology and deformations in graded Lie algebras. {\em Bull.
%Amer. Math. Soc.} {\bf 72} (1966) 1-29.

%\bibitem{NR2} A. Nijenhuis and R. Richardson,  Commutative  algebra cohomology and deformations of Lie and associative algebras. {\em J. Algebra} {\bf 9} (1968) 42-105.

%\bibitem{Sta63} J. Stasheff, Homotopy associativity of H-spaces. I, II. \emph{Trans. Amer. Math. Soc.} {\bf 108} (1963), 275-292; ibid. {\bf 108} (1963), 293-312.

%\bibitem{SL}Y. Sheng and Z. Liu, Leibniz 2-algebras and twisted Courant algebroids. \emph{Comm. Algebra} {\bf 41}  , 1929-1953.

%\bibitem{ST}Y. Sheng and R. Tang, Leibniz bialgebras, relative Rota-Baxter operators and the classical Leibniz  Yang-Baxter equation. {arXiv:1902.03033}.

%\bibitem{TBGS-1}
%R. Tang, C. Bai, L. Guo and Y. Sheng, Deformations and their controlling cohomologies of $\huaO$-operators. \emph{Comm. Math. Phys.} {\bf 368} (2019), 665-700.


\bibitem{Miller-1}
J. B. Miller,  Averaging and Reynolds operators on Banach algebras. I. Representation by derivations and antiderivations. \emph{J. Math. Anal. Appl.} {\bf 14 } (1966), 527-548.

\bibitem{NS}
H. Nicolai and H. Samtleben, Maximal gauged supergravity in three dimensions.
\emph{Phys. Rev. Lett.} {\bf86} (2001), 1686-1689.


\bibitem{Pei-Bai-Guo-Ni}
J. Pei, C. Bai, L. Guo and X. Ni, Replicating of binary operads, Koszul duality, Manin products and average operators.   \emph{New Trends in Algebras and Combinatorics} (2020), 317-353.

\bibitem{PG}
J. Pei and L. Guo,   Averaging algebras, Schroder numbers, rooted trees and operads. \emph{J. Algebraic Combin.} {\bf42} (2015), 73-109.

\bibitem{Pal}
J. Palmkvist, The tensor hierarchy algebra. \emph{J. Math. Phys.} {\bf55} (2014), 011701, 21 pp.

%\bibitem{Sheng-Bai}
%Y. Sheng and C. Bai, A new approach to hom-Lie bialgebras. {\em J. Algebra} 399 (2014), 232-250.
%\bibitem{Harrison}
%D. K. Harrison, Commutative algebras and cohomology, {\em Transactions of the American Mathematical Society} 104 (1962), 191-204.

%\bibitem{Sheng-Liu}Y. Sheng and Z. Liu,  Remarks on Leibniz algebras,  arXiv:1408.2225.
%\bibitem{Rota} G. C. Rota, Baxter operators, an intoduction, in Gian-Carlo Rota on Combinatics: Introductory papers and commentaries (Joseph P.S. Kung, Ed), Birkh¡§auser, Boston (1995).

%\bibitem{STS} M. A. Semonov-Tian-Shansky, What is a classical R-matrix? \emph{Funct. Anal. Appl.} {\bf 17} (1983) 259-272.

%\bibitem{LiuSheng}Y. Sheng and Z. Liu, Leibniz 2-alegbras and twisted Courant algebroids. \emph{Comm. Algebra} 41 (05), 1929-1953.

\bibitem{Rota}
G.-C. Rota,  Ten mathematics problems I will never solve. \emph{Mitt. Dtsch. Math.-Ver.} {\bf 2} (1998),  45-52.

\bibitem{STZ}
Y. Sheng, R. Tang and C.  Zhu, The controlling $L_\infty$-algebra, cohomology and homotopy of embedding tensors and Lie-Leibniz triples. \emph{Comm. Math. Phys.} {\bf386} (2021), 269-304.

%\bibitem{STW}
%Y. Sheng, R. Tang and F. Wagemann, A unified approach to relative Rota-Baxter operators and relative averaging operators on Lie algebras. Submitted.


\bibitem{SMT}
L. Song, A. Makhlouf  and R. Tang, On non-abelian extensions of 3-Lie algebras.
\emph{Commun. Theor. Phys. (Beijing)} {\bf69} (2018), 347-356.



 %\bibitem{stasheff:shla}
 %J. Stasheff, Differential graded {L}ie algebras, quasi-Hopf algebras and higher
  %homotopy algebras.   \emph{Quantum groups (Leningrad, 1990)}, 120-137,
  %Lecture Notes in Math., 1510, \emph{ Springer, Berlin,} 1992.



\bibitem{Scheuneman}
J. Scheuneman, Two-step nilpotent Lie algebras. \emph{J. Algebra} {\bf7} (1967), 152-159.

%\bibitem{Tang-Bai-Guo-Sheng}
%R. Tang, C. Bai, L. Guo and Y. Sheng, Deformations and their controlling cohomologies of $\huaO$-operators. \emph{arXiv:1803.09287}.

%\bibitem{Uchino08}
%K. Uchino,   Quantum analogy of Poisson geometry, related dendriform algebras and Rota-Baxter operators. \emph{Lett. Math. Phys.} 85 (2008), no. 2-3, 91-109.

%\bibitem{Uchino}
%K. Uchino, Twisting on associative algebras and Rota-Baxter type operators. {\em J. Noncommut. Geom.} 4 (2010), no. 3, 349-379.

%\bibitem{Uchino-1}
%K. Uchino,    Derived brackets and sh Leibniz algebras. \emph{J. Pure Appl. Algebra} {\bf215} (2011),  1102-1111.

\bibitem{Uchino-2}
K. Uchino, Derived bracket construction and Manin products. \emph{Lett. Math. Phys.} {\bf93} (2010), 37-53.

%\bibitem{Vo}
%Th. Voronov, Higher derived brackets and homotopy algebras. \emph{J. Pure Appl. Algebra}  {\bf 202} (2005), 133-153.

%\bibitem{Vallette}
%B. Vallette, Manin products, Koszul duality, Loday algebras and Deligne conjecture.  \emph{J. Reine Angew.
%Math.} {\bf620} (2008) 105-164.



\bibitem{WZ}
K. Wang and G. Zhou, Cohomology theory of averaging algebras, $L_\infty$-structures and homotopy averaging algebras. arXiv:2009.11618.


\bibitem{ZGR}
S. Zheng, L. Guo and M. Rosenkranz, Rota-Baxter operators on the polynomial algebra, integration, and averaging operators. \emph{Pacific J. Math.} {\bf275} (2015),  481-507.

\end{thebibliography}
 \end{document}